\definecolor{rltred}{rgb}{0.75,0,0}
\definecolor{rltgreen}{rgb}{0,0.5,0}
\definecolor{rltblue}{rgb}{0,0,0.75}
\definecolor{darkcerulean}{rgb}{0.03, 0.27, 0.49}
\numberwithin{equation}{section}
\newcommand{\remove}[1]{}
\newcommand{\Forall}{\;\forall\,}
\newcommand{\Eh}{\mathcal{E}_h}
\newcommand{\Sh}{\mathcal{S}_h}
\newcommand{\calS}{\mathcal{S}}
\newcommand{\calT}{\mathcal{T}}
\newcommand{\calC}{\mathcal{C}}
\newcommand{\dive}{\operatorname{div}}
\newcommand{\support}{\operatorname{supp}}
\newcommand{\bv}{{\bm v}}
\newcommand{\bn}{{\bm n}}
\newcommand{\bw}{{\bm w}}
\newcommand{\bH}{{\bm H}}
\newcommand{\bV}{{\bm V}}
\newcommand{\bP}{{\bm P}}
\newcommand{\bPi}{{\bm \Pi}}
\newcommand{\bpsi}{{\bm \psi}}
\newcommand{\Th}{\mathcal{T}_h}
\newcommand{\dx}{\,\mathrm{d}x}
\newcommand{\ds}{\,\mathrm{d}s}
\providecommand{\normtmp}[2]{{#1\lVert #2 #1\rVert}}
\providecommand{\norm}[1]{\normtmp{}{#1}}
\providecommand{\abstmp}[2]{{#1\lvert #2 #1\rvert}}
\providecommand{\abs}[1]{\abstmp{}{#1}}
\newtheorem{theorem}{Theorem}
\newtheorem{lemma}[theorem]{Lemma}
\newtheorem{proposition}[theorem]{Proposition}
\theoremstyle{definition}
\newtheorem{remark}[theorem]{Remark}
\newtheorem{definition}[theorem]{Definition}
	\title[A local Fortin projection for the Scott--Vogelius elements]{A local Fortin projection for the Scott--Vogelius elements on general meshes}
    \author[F. Eickmann ]{Franziska Eickmann\textsuperscript{\textasteriskcentered}}
	\address{\textsuperscript{\textasteriskcentered}Department of Mathematics, Technische Universität Darmstadt, Dolivostrasse 15, 64293 Darmstadt, Germany}
	\email{eickmann@mathematik.tu-darmstadt.de}
    \email{tscherpel@mathematik.tu-darmstadt.de}
	\thanks{}
	\author[J. Guzm\'an]{Johnny Guzm\'an\textsuperscript{\textdagger}}
	\address{\textsuperscript{\textdagger} Division of Applied Mathematics, Brown University, Providence, RI 02912, USA}
	\email{johnny\_guzman@brown.edu}
	\thanks{}
	\author[M. Neilan]{Michael Neilan\textsuperscript{\textdaggerdbl}}
	\address{\textsuperscript{\textdaggerdbl}Departments of Mathematics, University of Pittsburgh}
	\email{neilan@pitt.edu}
	\thanks{}
	\author[R. Scott]{L. Ridgway Scott\textsuperscript{\textsection}}
	\address{\textsuperscript{\textsection}Departments of  Computer Science and Mathematics,
		Committee on Computational and Applied Mathematics,
		University of Chicago, Chicago IL 60637, USA}
	\email{ridg@uchicago.edu}
	\thanks{}
	\author[T. Tscherpel]{Tabea Tscherpel\textsuperscript{\textasteriskcentered}}
	\thanks{}
\begin{document}


	\begin{abstract}  
		We construct a local Fortin projection for the Scott--Vogelius finite element pair for polynomial degree $k \ge 4$ on general shape-regular triangulations in two dimensions. 
        In particular, the triangulation may contain singular vertices.
        In addition to preserving the divergence in the dual of the pressure space, the projection preserves discrete boundary data and satisfies local stability estimates.
	\end{abstract}

	\maketitle
	
	\medskip
	
	\keywords{Fortin operator, Scott-Vogelius element}
	\smallskip
	
	\subjclass{65N30, 65N12, 76D07
    }
	\date{}

	\section{Introduction}

            \setlength\itemsep{.3em}

	The Scott--Vogelius \cite{scott1985norm} finite elements (FE) are the first pair of inf-sup stable FE spaces that produce divergence-free velocity approximations for incompressible flow. 
	On a general two-dimensional simplicial triangulation, the velocity space is composed of globally continuous piecewise polynomials of degree $k$ ($k \ge 4$), whereas the pressure space consists
	of piecewise polynomials of degree $\le k-1$ with certain constraints at so-called singular vertices. 
	
	Fortin projections are a common tool for the analysis of FE spaces for fluid flow problems \cite{brezzi2008mixed}. 
	It is well-known that inf-sup stability implies the existence of a Fortin operator; however, this operator
	is not necessarily local and does not in general have trace-preserving properties. 
    
    Here, we construct a local Fortin projection for the Scott--Vogelius element on general triangulations, including ones with singular vertices. 
    Local Fortin projections play an essential role in several areas
    of FE analysis.
    For example, they are a critical component in the error analysis in $L^\infty$-norms  
    for incompressible Stokes and Navier--Stokes flow \cite{girault2015max,  guzman2012pointwise}. 
	Furthermore, in the numerical analysis of nonlinear fluid equations arising in non-Newtonian fluid flow, they are a key tool; see, e.g.,~\cite{BelenkiBerselliDieningEtAl2012,DKS.2013,ST.2020}. 
	In addition, Fortin operators that allow for inhomogeneous traces are central for the numerical analysis of fluid problems  with inhomogeneous Dirichlet boundary conditions~\cite{EickmannScottTscherpel2025,JessbergerKaltenbach2024}. 

    A local Fortin operator has recently been constructed in~\cite{ParkerSueli2025} for two-dimensional triangulations without singular vertices. Therein, the dependence of the stability constants on the polynomial degree is the main focus and challenge.    
	In contrast, here we treat general triangulations, including 
    those with singular vertices.
    However, we do not address the dependence on the polynomial degree in this work. 
    For velocity spaces with zero trace conditions, a local Fortin operator for general meshes can be found in \cite[A.2.2.2]{Tscherpel2018}, building on the inf-sup stability arguments in \cite{GuzmanScott19}. 
    Here, we treat the general case of a local operator that 
    applies also for inhomogeneous boundary conditions.

While singular (and nearly singular) vertices can be eliminated, e.g., 
by using a barycenter refinement, such procedures are not compatible with
commonly used adaptive mesh refinement strategies such as newest vertex bisection~\cite{Mitchel1991}.
Consequently, singular vertices are not merely a technical inconvenience but arise naturally within standard refinement routines.
Boundary singular vertices can generally be avoided in such an adaptive routine, for instance by enforcing full bisection of boundary elements, and are therefore less critical.
In the context of inhomogeneous Dirichlet boundary conditions for Scott–Vogelius elements, the exclusion of boundary singular vertices is sometimes assumed, see \cite{EickmannScottTscherpel2025}, though this restriction is not particularly severe. In contrast, interior singular vertices are typically generated during successive bisection steps and thus cannot be systematically avoided, making their treatment a practically relevant issue.
	
	Our Fortin projection is a correction of the discrete trace-preserving version of the Scott--Zhang interpolant~\cite{ScottZhang90}. 
    The corresponding correction operator, denoted by $\Pi^2$ below, depends on a key lemma (see Lemma \ref{mainlemma}), whose proof relies on the results in \cite{GuzmanScott19}. 
	
	In Section \ref{preliminaries} we give the definition of the Scott--Vogelius elements along with some preliminary results. In Section \ref{fortinsection} we construct the Fortin projection and we prove stability estimates. 
    Finally, in the appendix we prove Lemma~\ref{mainlemma} and Lemma~\ref{mainlemmaWithSlip}.

	\section{Preliminaries}\label{preliminaries}

	We assume that $\Omega \subset \mathbb{R}^2$ is a bounded polygonal domain. We let $\{ \mathcal{T}_h\}_h$ be a family of conforming, nondegenerate (shape regular) triangulations of $\Omega$ decomposing $\Omega$ into a set of closed triangles; see \cite{brenner2007mathematical}.   
	We set the local mesh size as $h_T \coloneqq  {\rm diam}(T)$ for all $T\in \calT_h$ and the maximal mesh size is denoted by $h \coloneqq \max_{T\in \calT_h} h_T$.
	The sets of vertices and  edges
     of $\mathcal{T}_h$ are denoted by
	\begin{alignat*}{1}
		\Sh=&\{ z\colon  z  \text{ is a vertex of } \Th \},\qquad 
		\Eh=\{ e\colon e \text{ is an edge of }  \Th\},
	\end{alignat*}
	respectively. 
	We also define the sets of edges and triangles that have $z \in \Sh$ as a vertex, i.e., 
	\begin{alignat*}{1}
		\Eh(z)=&\{ e \in \Eh\colon z \text{ is a vertex of } e \},\qquad
		\Th(z)=\{ T \in \Th\colon z \text{ is a vertex of } T \}.  
	\end{alignat*}
	Finally, we define the patch of a vertex $z \in \Sh$ by
	\begin{equation*}
		\Omega_h(z) \coloneqq \bigcup_{T \in \Th(z)} T,
	\end{equation*}
	with diameter denoted by $h_z\coloneqq \text{diam}(\Omega_h(z))$.

	To define the pressure space of the finite element pair we first need to define \emph{singular} and \emph{non-singular} vertices. 
	Let $z \in \Sh$ and label the set of triangles abutting $z$ as
	$\Th(z)=\{ T_1, T_2, \ldots T_L \}$ for some $L \in \mathbb{N}$. 
    If $z$ is a boundary vertex then we enumerate the triangles such that $T_1$ and  $T_L$ have a boundary edge.  Moreover, we enumerate them so that $T_j, T_{j+1}$ share an edge for $j=1, \ldots L-1$ and  $T_L$ and $T_1$ share an edge in case $z$ is an interior vertex. Let $\theta_j $ denote the angle between the edges of $T_j$ originating from $z$.   We define 
	\begin{align*}
		\Theta(z) \coloneqq
		\begin{cases}
			\max \{ |\sin(\theta_1+\theta_{2})|,  \ldots, |\sin(\theta_{L-1}+\theta_{L})|, |\sin(\theta_L+\theta_1)| \}& \text{ if }  z \text{ is an interior vertex, } \\
			\max \{ |\sin(\theta_1+\theta_{2})|,  \ldots, |\sin(\theta_{L-1}+\theta_{L})| \} & \text{ if }  z \text{ is a boundary vertex}.
		\end{cases}
	\end{align*}
	
	\begin{definition}
		A vertex $z \in \Sh$ is a \emph{singular vertex} if $\Theta(z)=0$. It is \emph{non-singular} if $\Theta(z) >0$.
	\end{definition}

	We denote the set of non-singular vertices by
	\begin{equation*}
		\Sh^1 \coloneqq \{ z \in \Sh\colon  z \text{ is non-singular} \},
	\end{equation*}
	and all singular vertices by $\Sh^2 \coloneqq\Sh \backslash \Sh^1$. Finally, $\mathring{\calS}_h^2$ denotes the set of interior singular vertices, and $\Sh^{2, \partial}$ denotes the set of boundary singular vertices.   
	Clearly, we have $\calS_h = \calS_h^1\cup \calS_h^2$ and $\calS_h^2 = \mathring{\calS}_h^2 \cup \calS^{2,\partial}_h$.

	We will use the following proposition due to \cite{Sauter23}. 
    
	\begin{proposition}[{\cite[Lem.~2.10]{Sauter23}}]\label{prop:Sauter}
		If $z \in \mathring{\calS}_h^2$ then $\Omega_h(z)$ does not intersect any other $y \in \Sh^2$.   
	\end{proposition}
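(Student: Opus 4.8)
The plan is to first pin down the geometry forced by $\Theta(z)=0$ at an interior vertex, and then to show that every \emph{other} vertex of the patch violates the singularity condition. Since each triangle is nondegenerate, every angle $\theta_j$ at $z$ lies in $(0,\pi)$, so each cyclic sum $\theta_j+\theta_{j+1}$ lies in $(0,2\pi)$; the requirement $\sin(\theta_j+\theta_{j+1})=0$ for all $j$ therefore forces $\theta_j+\theta_{j+1}=\pi$ for every consecutive pair. Subtracting consecutive relations gives $\theta_{j+2}=\theta_j$, so the angles alternate between two values $\alpha$ and $\pi-\alpha$, and the constraint $\sum_j\theta_j=2\pi$ then pins down exactly $L=4$ triangles $T_1,T_2,T_3,T_4$. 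Geometrically, $\theta_j+\theta_{j+1}=\pi$ says that the edges $e_j$ and $e_{j+2}$ emanating from $z$ are opposite rays; hence the four edges at $z$ lie on two distinct straight lines crossing at $z$, say $\ell_1$ through $z$ and the far endpoints of $e_1,e_3$, and $\ell_2$ through $z$ and the far endpoints of $e_2,e_4$. The lines are genuinely distinct since the angle between them is $\alpha\in(0,\pi)$, so $\ell_1\cap\ell_2=\{z\}$.

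Next I would reduce the claim to the neighbours of $z$. Because the triangulation is conforming, the only vertices of $\Th$ contained in $\Omega_h(z)=T_1\cup\cdots\cup T_4$ are $z$ itself and the four outer vertices $y_1,y_2,y_3,y_4$ (the far endpoints of $e_1,\dots,e_4$). So it suffices to prove that each $y_i\notin\Sh^2$. Fix $y_1$, which lies on $\ell_1$, and consider the two triangles of the patch meeting there, $T_1=\triangle z y_1 y_2$ and $T_4=\triangle z y_4 y_1$; they share the edge $zy_1$, which is interior since $z$ is an interior vertex. Consequently $T_1$ and $T_4$ form a consecutive pair in the enumeration of $\Th(y_1)$ around $y_1$, regardless of whether $y_1$ is itself an interior or a boundary vertex, and so the quantity $\sin\bigl(\angle_{T_1}(y_1)+\angle_{T_4}(y_1)\bigr)$ appears among the terms defining $\Theta(y_1)$.

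Finally I would argue by contradiction. If $y_1$ were singular, this term would vanish, giving $\angle_{T_1}(y_1)+\angle_{T_4}(y_1)=\pi$ (the value $2\pi$ being excluded by nondegeneracy). This means that the outer edges $y_1y_2$ and $y_1y_4$ of $T_1$ and $T_4$ are opposite rays at $y_1$, i.e.\ $y_2,y_1,y_4$ are collinear. But $y_2$ and $y_4$ both lie on $\ell_2$, so the line through them is exactly $\ell_2$, and the collinearity forces $y_1\in\ell_2$. Since also $y_1\in\ell_1$, we obtain $y_1\in\ell_1\cap\ell_2=\{z\}$, contradicting $y_1\neq z$. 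Hence $y_1$ is non-singular, and by the cyclic symmetry of the crossing-line configuration the same conclusion holds for $y_2,y_3,y_4$.

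I expect the only delicate point to be the bookkeeping of the enumeration around the neighbouring vertex: one must verify that the pair $(T_1,T_4)$ really does occur as one of the consecutive pairs in the definition of $\Theta(y_1)$ in \emph{both} the interior and boundary cases, which hinges on $zy_1$ being an interior edge (guaranteed because $z$ is interior). The derivation that an interior singular vertex has exactly four triangles arranged as two crossing lines is routine once the angle identities are set up, and the final collision $y_1\in\ell_1\cap\ell_2$ is then immediate.
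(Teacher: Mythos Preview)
Your argument is correct. The paper does not supply its own proof of this proposition; it simply quotes the result from \cite{Sauter23}, so there is nothing to compare your approach against at the level of method. Your derivation that an interior singular vertex has exactly four triangles with edges lying on two crossing lines is standard and correctly carried out, and the contradiction at each neighbouring vertex $y_i$ via $y_i\in\ell_1\cap\ell_2=\{z\}$ is clean. The one point you flagged as delicate---that the pair $(T_1,T_4)$ really is a consecutive pair in the enumeration around $y_1$ even when $y_1$ is a boundary vertex---is handled correctly: since $zy_1$ is an interior edge, $T_1$ and $T_4$ share an edge at $y_1$, and at a boundary vertex the only pair of triangles in $\calT_h(y_1)$ that is \emph{not} consecutive is the pair $(T_1',T_{L'}')$, which does not share an edge (for $L'\ge 3$) or coincides with the unique consecutive pair (for $L'=2$).
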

	However, boundary singular vertices might share an edge with other boundary singular vertices. 
    Therefore, we decompose the set of boundary singular vertices as follows:
	\begin{equation*}
		\Sh^{2,\partial}= \bigcup_{j=1}^N B_h^j,
	\end{equation*}
	for some $N \in \mathbb{N}$, where for each $j$, $B_h^j$ is a collection of boundary singular vertices that are connected by a boundary edge path. Furthermore, each $B_h^j$ is assumed to be maximal, meaning that if $z$ is boundary singular vertex  such that $z$ is connected to a $y \in B_h^j$ via a boundary edge path, then $z \in B_h^j$.

	For each collection $B_h^j$, for $j \in \{1, \ldots, N\}$, we denote its patch as follows:
	\begin{equation}\label{def:sing-bd-patch}
		M_h^j\coloneqq \bigcup_{z \in B_h^j} \Omega_h(z).
	\end{equation}
	We also set 
	\begin{equation*}
		\Th^1\coloneqq\{ T \in \Th\colon \text{ all the vertices of } T \text{ are non-singular} \}.    	\end{equation*}

	We consider  the collection 
	\begin{equation}\label{eq:Ch-collection}
		\mathcal{C}_h\coloneqq \{ T\colon  T \in \Th^1\} 
		\cup 
		\{ \Omega_h(z)\colon    z \in \mathring{\calS}_h^2 \} 
		\cup  
		\{ M_h^j\colon  1 \le j \le N\}  ,   
	\end{equation}
	which forms a decomposition of the domain (cf.~Figure \ref{fig:Triangulation}) in the sense that 
	\begin{equation}\label{eqn:decomposition}
		\overline \Omega= \bigcup_{D \in \mathcal{C}_h} D. 
	\end{equation}
    From Proposition~\ref{prop:Sauter} and since each $B_h^j$ is maximal,
	we see that the interior of these sets in $\mathcal{C}_h$ are disjoint, i.e.,
	\eqref{eqn:decomposition} is a non-overlapping decomposition.

\begin{figure}
\begin{tikzpicture}[scale=3,rotate=90]
  \coordinate (A) at (0,2);
  \coordinate (B) at (1,2);
  \coordinate (C) at (2,2);
  \coordinate (D) at (2,1);
  \coordinate (G) at (2,-1);
  \coordinate (H) at (0,-1);
  \coordinate (F) at (0,1);
  \coordinate (E) at (1,1);
  \coordinate (I) at (2/3,1/3);
  \coordinate (J) at ($(I)!0.5!(H)$);

  \fill[blue!20] (E) -- (A) -- (F) -- cycle; 
  \fill[blue!20] (A) -- (E) -- (B) -- cycle; 
  \fill[blue!20] (G) -- (D) -- (E) -- cycle; 

  \fill[red!20] (D) -- (C) -- (B) -- (E) -- cycle;

  \fill[green!20] (G) -- (J) -- (H) -- cycle; 
  \fill[green!20] (G) -- (I) -- (J) -- cycle; 
  \fill[green!20] (G) -- (E) -- (I) -- cycle; 
  \fill[green!20] (E) -- (F) -- (I) -- cycle; 

  \draw[thick]
    (A) -- (B) -- (C) -- (D) -- (G) -- (H)
    (A) -- (F);

  \draw
    (A) -- (E)
    (B) -- (D)
    (C) -- (E)
    (E) -- (D)
    (E) -- (G)
    (E) -- (I) -- (H)
    (F) -- (I) -- (G)
    (E) -- (B)
    (J) -- (G)
    (I) -- (G);

  \draw[thick]
    (F) -- (I) -- (J) -- (H);

\draw (E)--(F);
\end{tikzpicture}
\caption{\label{fig:Triangulation}A pictorial description of the decomposition of the domain $\calC_h$. 
The components 
of $\calT_h^1$, $\{\Omega_h(z):z\in \mathring{\mathcal{\calS}}_h^2\}$, 
and $\{M_h^j:1\le j\le N\}$ are depicted in blue, red, and green, respectively. Note that the cardinality of $\calC_h$ in this example is five.}
\end{figure}
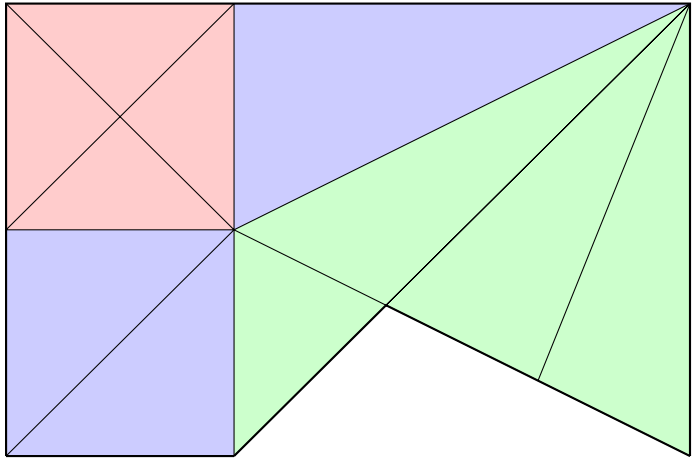
    For a domain $M\subset \Omega$ with
    $M = \cup_{T\in \mathcal{M}_h} T$ for some sub-triangulation $\mathcal{M}_h\subset \calT_h$, we set 
	\begin{equation}\label{def:ThetaD}
		\Theta(M)\coloneqq \mathop{\min_{z \in \calS_h^1}}_{z \text{ a vertex in } \mathcal{M}_h} \Theta(z)>0. 
	\end{equation}
	as the measure of ``near singularity'' with respect to $M$.
	We also define the patch of $M$, given by
	\begin{equation}
		P(M)\coloneqq\bigcup_{z \text{ vertex in } \mathcal{M}_h} \Omega_h(z).
	\end{equation}
    With slight abuse of notation we also write $P(\mathcal{M}_h)$ as the patch for a  sub-triangulation $\mathcal{M}_h$. 
	We also denote $h_D \coloneqq \operatorname{diam}(D)$ for any $D \in \mathcal{C}_h$.
    \begin{remark}\label{rem:hDsize}        
  Note that by the shape regularity of the triangulation and by Proposition~\ref{prop:Sauter},
    for $h$ sufficiently small, the number of triangles contained in $D\in \mathcal{C}_h$ is uniformly bounded.  Thus, the diameter $h_D$ is proportional to $\min_{T \subset D} h_T$ with constant only depending on the shape regularity constant.
    \end{remark}
	
    In the following $C$, denotes a generic constant that may change from line to line and only depends on the shape regularity constant, the polynomial degree $k \in\mathbb{N}$ introduced below and in some cases also the Lebesgue exponent $p \in [1,\infty]$. 
	
	\subsection*{Finite Element Spaces}
	
	Let $q$ be a piecewise continuous function, i.e., $q|_T \in C^0({T})$ for all $T \in \Th$. 
    For each singular vertex $z \in \Sh^2$ we define
	\begin{equation}\label{eq:altern-sum}
		A_h^z(q) \coloneqq \sum_{j=1}^L (-1)^{L-j} q|_{T_j}(z),
	\end{equation}
	where $L$ is the cardinality of $\calT_h(z)$. 
    Then for $k \in \mathbb{N}$, the Scott--Vogelius finite element pair of spaces is given by:
	\begin{alignat*}{1}
		\bV_h^k=&\{\bv \in \bH^1(\Omega)\colon \bv|_T \in \bP^k(T) \,\Forall T \in \Th\}, \\
		Q_h^{k-1}=& \{ q \in {\mathring{L}^2(\Omega)}\colon q|_T \in P^{k-1}(T) \Forall T \in \Th,\; A_h^z(q)=0 \,
		\Forall  z  \in \Sh^2 \}.
	\end{alignat*}
   The alternating sum condition \eqref{eq:altern-sum} reduces the dimension of the pressure space by one per singular vertex compared to the DG space of polynomial degree $k-1$. It is key to ensure that the divergence of the velocity space is the pressure space, see \cite[Lem.~2]{GuzmanScott19}.
    
	Here, $P^k(T)$ is the space of polynomials of degree less than or equal to $k$ defined on $T$,
	$\bP^k(T) = [P^k(T)]^2$, and $\bH^1(\Omega) = [H^1(\Omega)]^2$. 
	Furthermore, for $p \in [1,\infty]$, by $W^{1,p}(\Omega)$  we denote the standard Sobolev spaces, as usual we set $H^1(\Omega) \coloneqq W^{1,2}(\Omega)$. 
    By $L^p(\Omega)$ we denote the standard Lebesgue spaces, and ${\mathring{L}^2(\Omega)}$ denotes the Lebesgue function space with zero mean integral. 
	We also define the discrete velocity space 
    with zero trace as 
	\begin{align}
		\mathring{\bV}_h^k =\bV_h^k \cap \mathring{\bH}^1(\Omega),
	\end{align}
	where $\mathring{\bH}^1(\Omega)$ denotes the space of vector-valued Sobolev functions with zero traces. 
    Note that 
    \begin{align}\label{eq:divV}
        \dive \mathring{\bV}_h^k\subset Q_{h}^{k-1} \quad \text{ for } k \geq 1,
    \end{align} and it has been shown that $\dive \mathring{\bV}_h^k= Q_{h}^{k-1}$ for $k \geq 4$, see \cite{GuzmanScott19,scott1985norm}.
	
	We denote the subspace of $Q_h^{k-1}$, consisting of functions with vanishing mean
	on each triangle of $\calT_h$,  by
	\begin{equation}\label{def:Qhperp}
		Q_h^{k-1, \perp}\coloneqq
		\Big\{ q \in Q_h^{k-1}\colon \int_T q \dx =0 \quad  \forall T \in \Th \Big\},
	\end{equation}
	and also set
	\begin{equation}
		Q_h^{k-1, \perp}(\mathcal{C}_h) \coloneqq \Big\{ q \in Q_h^{k-1}\colon  \int_D q \dx =0 \quad  \forall D \in \mathcal{C}_h \Big\}. 
	\end{equation}
	For  $D \in \mathcal{C}_h$ as in~\eqref{eq:Ch-collection}, we consider the sets of pressure functions supported on the patch $D$ by
	\begin{equation*}
		Q_h^{k-1}(D)\coloneqq \{  q \in Q_h^{k-1} \colon \support(q)\subset  D \},\qquad
		Q_h^{k-1, \perp}(D)\coloneqq Q_h^{k-1,\perp}\cap Q_h^{k-1}(D).
	\end{equation*}
	Notice that $\int_D q\dx=\int_{\Omega} q\dx=0$ for all $q \in Q_h^{k-1}(D)$ since  $q$ has support in $D$ and has vanishing mean.  Hence, we have that   \begin{align}\label{Qsubset}
		Q_h^{k-1,\perp}(D)\subseteq Q_h^{k-1}(D) 
  \subseteq  Q_h^{k-1, \perp}(\mathcal{C}_h) \qquad \text{ for all } D \in \mathcal{C}_h.
	\end{align}

	Using the results in~\cite{GuzmanScott19}, we have the following
	statement on the local surjectivity properties
	of the divergence operator acting on $\mathring{\bV}_h^k$.
	For the sake of completeness, we provide a proof of the following lemma in 
	Appendix~\ref{appendix}. 
    

		\begin{lemma}\label{mainlemma}
			Let $D\in \calC_h$, $k\ge 4$, and $p\in [1,\infty]$.
			Then for any $q \in Q_h^{k-1}(D)$, there exists $\bv\in \mathring{\bV}_h^k$ such that
			\begin{enumerate}[label = (\roman*)]
            		\setlength\itemsep{.3em}
				\item $\dive \bv=q$, 
				\item  the support of $\bv$ is contained in $P(D)$, and
				\item $\|\nabla \bv\|_{L^p(P(D))} \le C_D \| q\|_{L^p(D)}$,\medskip\\
				with $C_D = C\left(1+ \frac{1}{\Theta(D)}\right)$, for $\Theta(D)$ as in \eqref{def:ThetaD}  and $C>0$ depending only on $k$, $p$, and on the shape
				regularity of the triangulation $\calT_h$ restricted to $P(D)$. 
			\end{enumerate}
		\end{lemma}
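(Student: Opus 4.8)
The plan is to write $\bv=\bv_c+\bv_b$, where $\bv_c$ fixes the element-wise means $\int_T q$ and $\bv_b$ corrects the resulting mean-free remainder. Only the first step involves the near-singularity $\Theta(D)$; the second is a purely local bubble correction. Observe first that in the single-triangle case $D=T\in\Th^1$ the support condition and the global constraint $\int_\Omega q=0$ force $\int_T q=0$, so there are no nonzero element means, $\bv_c=0$, and only the (scale-invariant) bubble step is needed, yielding $\support\bv\subset T\subset P(D)$ with a constant independent of $\Theta$.

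The first and central step is to treat the element-mean data, which is the heart of the matter. Using the vertex-star building blocks of \cite{GuzmanScott19}, I would construct $\bv_c\in\mathring{\bV}_h^k$ with $\support\bv_c\subset P(D)$ and
\[
\int_T \dive\bv_c=\int_T q\qquad\text{for every }T\in\Th .
\]
The construction proceeds vertex by vertex: the data $\{\int_T q\}_T$ is decomposed into contributions localized to the stars $\Omega_h(y)$ of the vertices $y$ of $\mathcal{M}_h$, and on each star one solves the constant-pressure divergence problem with a zero-trace velocity supported in $\Omega_h(y)$. Since $P(D)=\bigcup_{y}\Omega_h(y)$ is exactly the union of these stars, the assembled $\bv_c$ is supported in $P(D)$, and the bounded overlap of the stars (shape regularity, Remark~\ref{rem:hDsize}) allows the local estimates to be summed with a constant of the desired form.

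The main obstacle is the solvability and stability of these local solves at \emph{nearly} singular vertices. On the star of an interior vertex $z$ with triangles $T_1,\dots,T_L$, matching the mean data by a zero-trace velocity reduces to a small linear system whose relevant matrix has entries built from $\sin(\theta_j+\theta_{j+1})$; at a genuinely singular vertex its cokernel is spanned precisely by the functional $A_h^z$, and the quantitative invertibility of \cite{GuzmanScott19} on the orthogonal complement yields a local bound of order $1/\Theta(z)$. The constraint $A_h^z(q)=0$ built into $Q_h^{k-1}$ guarantees that the data lies in the range, so each local solve is well posed; by Proposition~\ref{prop:Sauter} and the maximality of the $B_h^j$, any star appearing in the assembly couples to the exceptional singular vertex only through its otherwise non-singular neighbours, and for the boundary clusters $M_h^j$ the zero-trace condition forces vanishing flux across boundary edges, which is exactly the compatibility needed there. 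Taking the minimum over the non-singular vertices of $\mathcal{M}_h$ then gives $\|\nabla\bv_c\|_{L^p(P(D))}\le C(1+1/\Theta(D))\|q\|_{L^p(D)}$.

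Finally I would correct the remaining divergence. Set $r\coloneqq q-\dive\bv_c$; by construction $r$ is a piecewise polynomial of degree $\le k-1$ with $\int_T r=0$ for every $T\in\Th$ and $\support r\subset P(D)$. On each triangle the classical interior-bubble right inverse of the divergence (the surjection from $\mathring{\bP}^k(T)$ onto the mean-free polynomials of degree $\le k-1$) produces a velocity $\bv_b$, supported triangle by triangle in $P(D)$, with $\dive\bv_b=r$, zero trace, and $\|\nabla\bv_b\|_{L^p(P(D))}\le C\|r\|_{L^p(P(D))}$ by scaling. Since $\|r\|_{L^p}\le\|q\|_{L^p}+\|\dive\bv_c\|_{L^p}\le C(1+1/\Theta(D))\|q\|_{L^p(D)}$, the choice $\bv\coloneqq\bv_c+\bv_b$ gives $\bv\in\mathring{\bV}_h^k$ with $\dive\bv=q$, $\support\bv\subset P(D)$, and the asserted estimate; this assembly is routine, so the whole difficulty is concentrated in the quantitative vertex-local solves of the third paragraph.
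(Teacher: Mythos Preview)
Your bubble step fails: the map $\dive\colon\mathring{\bP}^k(T)\to\{r\in P^{k-1}(T):\int_T r=0\}$ is \emph{not} surjective. For $k=4$ one has $\dim\mathring{\bP}^4(T)=(k-1)(k-2)=6$, whereas the mean-free part of $P^{3}(T)$ has dimension $9$, so an arbitrary mean-free remainder $r$ cannot be corrected triangle by triangle with interior bubbles. This is precisely the obstruction that forces an extra layer into the argument, and it is the layer your proposal omits.

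In the paper's proof the element means are matched first, by edge-bubble transports $\bv_i=b_{e_i}\bn_i/c_i$ along the interior edges of $D$; this step is a simple telescoping and is completely \emph{angle-free}, so your attribution of the $1/\Theta(D)$ factor to mean-matching is misplaced. The near-singularity bound enters in a second step, in which the (now element-wise mean-free) pressure has its \emph{vertex values} matched via the vertex-star functions of \cite[Lem.~6]{GuzmanScott19}; it is this vertex system whose solvability degrades like $1/\Theta(z)$, and the constraint $A_h^z(q)=0$ at singular $z$ is what keeps the data in its range. Only after both the element means and the vertex values have been hit does the residual lie in the class handled by \cite[Prop.~2]{GuzmanScott19} (mean-free \emph{and} vanishing at all vertices), and even that correction uses velocities supported on edge patches rather than single-triangle interior bubbles. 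In short, the missing idea is the vertex-value matching: it is the expensive, angle-dependent step, and it is needed exactly because the interior-bubble right inverse you invoke does not exist.
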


    Lemma~\ref{mainlemma} ensures the existence of a local discrete Bogovski\u{\i} operator, i.e., a right-inverse 
    of the divergence operator, cf.~\cite{BelenkiBerselliDieningEtAl2012}. 
    Other than the corresponding `continuous' local operator, which is a bounded mapping from $\mathring{L}^p(D) \to \mathring{\bm W}^{1,p}(D)$ only for $p \in (1,\infty)$ \cite[Thm.~5.2]{DieningRocircuzickaSchumacher2010}, stability of the discrete operator holds for all $p \in [1,\infty]$. 

	Setting
	\begin{equation}\label{eq:Vhk-D}
		\mathring{\bV}_h^k(D)\coloneqq \{ \bv \in \mathring{\bV}_h^{k}\colon  \dive \bv \in Q_h^{k-1}(D), \ {\rm supp}(\bv) \subset P(D)\},   
	\end{equation}
	 we have by definition that $	\dive \mathring{\bV}_h^k(D) \subset  Q_h^{k-1}(D)$, and Lemma~\ref{mainlemma} shows that
	\begin{equation}\label{eqn:range}
		\dive \mathring{\bV}_h^k(D)=  Q_h^{k-1}(D).  
	\end{equation}
	We also consider the divergence-free subspace of $\mathring{\bV}_h^k(D)$
	\begin{equation}
		\mathring{\bV}_h^{k, 0} (D)\coloneqq \{\bv \in \mathring{\bV}_h^k(D)\colon \dive \bv=0 \},
	\end{equation}
	so that by the rank-nullity theorem and \eqref{eqn:range} we have
	\begin{equation}\label{eq:dim}
		\dim \mathring{\bV}_h^{k} (D)= \dim  Q_h^{k-1}(D) +\dim \mathring{\bV}_h^{k, 0} (D). 
	\end{equation}

	\section{The Fortin Projection}\label{fortinsection}
    As is customary, we construct the Fortin projection by correcting a suitable quasi-interpolation operator ${\bm \Pi}^1 \colon {\bm W}^{1,1}(\Omega) \to {\bm V}^k_h$ by 
    a correction operator ${\bm \Pi}^2 \colon {\bm W}^{1,1}(\Omega)\to {\bm V}^k_h$.
    Specifically, the Fortin projection is given by
	\begin{equation}\label{eqn:FortinDef-0}
		\bPi \bv=\bPi^1 \bv+ \bPi^2(\bv-\bPi^1\bv) \qquad \text{ for } \bv \in {\bm W}^{1,1}(\Omega). 
	\end{equation}
	The correction operator will be the sum of the local operators $\bPi_D^2\colon {\bm W}^{1,1}(\Omega)  \rightarrow \mathring{\bV}_h^k(D)$, for $D \in \mathcal{C}_h$ (cf.~\eqref{eq:Ch-collection}) satisfying 
	
	\begin{subequations}\label{Pi2}
		\begin{alignat}{2}
			\int_{\Omega}  \dive \bPi_D^2 \bv \,  q \dx = & \int_{\Omega} \dive \bv\, q \dx \quad &&\text{ for any } q \in  Q_h^{k-1}(D), \label{Pi2_1}\\
			\int_{\Omega} \bPi_D^2 \bv \cdot \bw \dx =& \int_{\Omega }  \bv \cdot \bw \dx \qquad  &&\text{ for any } \bw  \in  \mathring{\bV}_h^{k, 0} (D). \label{Pi2_2}
		\end{alignat}
	\end{subequations}
    
    \begin{lemma}\label{lem:Pi2D}
		For each $D \in \mathcal{C}_h$ the operator $\bPi_D^2 \colon W^{1,1}(\Omega) \to \mathring{\bm V}^k_h(D)$ determined by \eqref{Pi2} is well-defined and is a linear projection. 
		Moreover, the following estimate holds for $p\in [1,\infty]$:
		\begin{equation}
			\norm{\bPi_D^2 \bv}_{L^p(P(D))}  + h_D \|\nabla(\bPi_D^2 \bv)\|_{L^p(P(D))} 
            \le C \left(\|\bv\|_{L^p(P(D))}+ C_D h_D \|\dive \bv\|_{L^p(D)}\right) 
            \label{761}
		\end{equation}
        for any $\bv\in {\bm W}^{1,p}(\Omega)$, 
		with constant $C>0$ independent of $D  \in \mathcal{C}_h$ and of $\bv$, and with constant $C_D>0$ as in Lemma~\ref{mainlemma}. 
	\end{lemma}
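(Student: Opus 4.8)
The plan is to split the argument into a well-posedness part and a stability part. For the first part, observe that the two conditions in \eqref{Pi2} constitute a square linear system on $\mathring{\bV}_h^k(D)$: by the dimension identity \eqref{eq:dim}, the number of test functions, $\dim Q_h^{k-1}(D)+\dim\mathring{\bV}_h^{k,0}(D)$, equals $\dim\mathring{\bV}_h^k(D)$. Hence it suffices to show that the homogeneous system has only the trivial solution. If $\bm u\in\mathring{\bV}_h^k(D)$ satisfies both relations with vanishing right-hand sides, then testing \eqref{Pi2_1} with $q=\dive\bm u\in Q_h^{k-1}(D)$ (admissible by \eqref{eq:Vhk-D}) gives $\dive\bm u=0$, so $\bm u\in\mathring{\bV}_h^{k,0}(D)$; testing \eqref{Pi2_2} with $\bw=\bm u$ then forces $\bm u=0$. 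This yields existence, uniqueness, and linearity. The projection property follows from uniqueness: any $\bm u\in\mathring{\bV}_h^k(D)$ trivially satisfies its own defining equations, so $\bPi_D^2\bm u=\bm u$, and applying this to $\bm u=\bPi_D^2\bv$ gives $\bPi_D^2\bPi_D^2\bv=\bPi_D^2\bv$.

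For the stability estimate I would decompose $\bPi_D^2\bv$ into a divergence-carrying part and a divergence-free part. Set $q^\ast\coloneqq\dive\bPi_D^2\bv\in Q_h^{k-1}(D)$; by \eqref{Pi2_1} this is precisely the $L^2$-orthogonal projection of $\dive\bv$ onto $Q_h^{k-1}(D)$. A scaling argument to a reference patch together with norm equivalence on this finite-dimensional space—whose cardinality and geometry are controlled by shape regularity through Remark~\ref{rem:hDsize}—shows that this projection is $L^p$-stable uniformly in $D$ and $p$, so $\|q^\ast\|_{L^p(D)}\le C\|\dive\bv\|_{L^p(D)}$, with $C$ independent of $\Theta(D)$ since the pressure constraints live only at the exactly singular vertices. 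Applying Lemma~\ref{mainlemma} to $q^\ast$ produces $\bv_1\in\mathring{\bV}_h^k(D)$ with $\dive\bv_1=q^\ast$, support in $P(D)$, and $\|\nabla\bv_1\|_{L^p(P(D))}\le C_D\|q^\ast\|_{L^p(D)}\le CC_D\|\dive\bv\|_{L^p(D)}$; since $\bv_1$ vanishes on $\partial P(D)$, Poincaré on the patch of diameter comparable to $h_D$ gives $\|\bv_1\|_{L^p(P(D))}\le Ch_D\|\nabla\bv_1\|_{L^p(P(D))}\le CC_Dh_D\|\dive\bv\|_{L^p(D)}$.

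It remains to control the divergence-free remainder $\bw\coloneqq\bPi_D^2\bv-\bv_1\in\mathring{\bV}_h^{k,0}(D)$. Testing \eqref{Pi2_2} with this $\bw$ and using $\bPi_D^2\bv=\bv_1+\bw$ yields $\|\bw\|_{L^2(P(D))}^2=\int_{P(D)}(\bv-\bv_1)\cdot\bw\dx$. I would then estimate the right-hand side by Hölder, convert $\|\bw\|_{L^{p'}(P(D))}$ and $\|\bw\|_{L^p(P(D))}$ to $\|\bw\|_{L^2(P(D))}$ by the inverse inequalities valid on the bounded-cardinality patch $P(D)$, and observe that the two $h_D$-scalings combine to the power $(2/p-1)+(2/p'-1)=0$ because $1/p+1/p'=1$. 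This gives $\|\bw\|_{L^p(P(D))}\le C(\|\bv\|_{L^p(P(D))}+\|\bv_1\|_{L^p(P(D))})$. Collecting the bounds for $\bv_1$ and $\bw$, together with the inverse inequality $\|\nabla\bw\|_{L^p(P(D))}\le Ch_D^{-1}\|\bw\|_{L^p(P(D))}$ and the gradient bound for $\bv_1$ from Lemma~\ref{mainlemma}, produces \eqref{761}.

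I expect the main obstacle to be the passage to general $p\in[1,\infty]$ from the intrinsically $L^2$ definition \eqref{Pi2}: both the $L^p$-stability of the $L^2$-projection onto $Q_h^{k-1}(D)$ and the treatment of the remainder $\bw$ require careful bookkeeping of the $h_D$-powers so that they cancel, and they require that all norm-equivalence and inverse-inequality constants be uniform over $\calC_h$ and independent of the near-singularity measure $\Theta(D)$—the only place where $\Theta(D)$ is allowed to enter being the constant $C_D$ supplied by Lemma~\ref{mainlemma}.
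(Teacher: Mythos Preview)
Your proposal is correct and follows essentially the same approach as the paper: the well-posedness via the square system and the dimension identity \eqref{eq:dim}, then the decomposition $\bPi_D^2\bv = \bv_1 + \bw$ with $\bv_1$ obtained from Lemma~\ref{mainlemma} applied to $q^\ast=\dive\bPi_D^2\bv$ and the divergence-free remainder controlled through \eqref{Pi2_2} with inverse/scaling estimates. Your treatment of the $L^p$-stability of $q^\ast$ and of the $h_D$-power cancellation for $\bw$ is spelled out more explicitly than in the paper (which simply writes ``by similar reasoning as above using inverse estimates''), but the underlying argument is the same.
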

	\begin{proof}
		We first show that $\bPi^2_D$ is well-defined and a projection. 
		Note that the number of degrees of freedom (dofs) given in~\eqref{Pi2} is exactly $\dim \mathring{\bV}_h^k(D)$, see \eqref{eq:dim}.  
        Suppose that the right-hand side of~\eqref{Pi2} vanishes. 
		Then by~\eqref{Pi2_1} and~\eqref{eqn:range} we have that $\dive \bPi_D^2 \bv = 0$, i.e., $\bPi_D^2 \bv \in \mathring{\bV}_h^{k,0} (D)$. Therefore, by the second set of dofs \eqref{Pi2_2}, it follows that $\bPi_D^2 \bv=0$. We conclude that the operator is well-defined
        and by similar arguments we conclude  it is a projection.
		
		To establish the stability estimate~\eqref{761}, for $p \in [1,\infty]$ let $\bv\in {\bm W}^{1,p}(\Omega)$ be arbitrary but fixed. 
        By Lemma~\ref{mainlemma} there exists ${\bm m} \in  \mathring{\bV}_h^{k} (D)$  such that $\dive {\bm m}= \dive \bPi_D^2 \bv \in Q^{k-1}_h(D)$ and 
        \begin{align}\label{est:nab-m}
\|\nabla {\bm m}\|_{L^p(P(D))} \le C_D \| \dive \bPi_D^2 \bv\|_{L^p(D)}.
        \end{align}
On the other hand,  by \eqref{eqn:range}, \eqref{Pi2_1}, and
        an inverse estimate, we have
        \[
        \|\dive \bPi_D^2 \bv\|_{L^2(D)}\le C h^{-1}_D \|\dive \bv\|_{L^1(D)}.
        \]
        Therefore, a scaling argument yields 
        \begin{equation}\begin{split}\label{est:sc-Pi2D}
        \|\dive \bPi_D^2 \bv\|_{L^p(D)}
        &\le C h_D^{2\left(\frac1p - \frac12\right)}\|\dive\bPi_D^2 \bv\|_{L^2(D)}\\
        &\le C h_D^{2\left(\frac1p-1\right)} \|\dive \bv\|_{L^1(D)}\le C \|\dive \bv\|_{L^p(D)}.
        \end{split}
        \end{equation}
		Thanks to
        $\support(\bm m) \subset P(D)$ by a Poincaré inequality and applying \eqref{est:nab-m} and \eqref{est:sc-Pi2D} we obtain
		\begin{align}\label{est:m}
			\norm{\bm m}_{L^p(P(D))}
            \leq 
            C h_D \|\nabla {\bm m}\|_{L^p(P(D))} &\le C  C_D  h_D \| \dive \bv\|_{L^p(D)},
		\end{align}
	   where we have also used the shape regularity of the triangulation. 
		
		Let ${\bm \eta}\coloneqq \bPi_D^2 \bv-{\bm m}$ and note that ${\bm \eta} \in \mathring{\bV}_h^{k,0} (D)$. 
	   By~\eqref{Pi2_2} we find  that
		\begin{equation*}
			\int_{\Omega} {\bm \eta} \cdot \bw \dx
            = \int_{\Omega }  (\bv-{\bm m}) \cdot \bw  
            \dx 
            \qquad  \text{ for any } \bw  \in  \mathring{\bV}_h^{k, 0} (D).
		\end{equation*}
		Hence, by similar reasoning as above using inverse estimates, and employing \eqref{est:m} we obtain 
		\begin{equation}\label{eqn:etaLp}
			\begin{split}
				\|{\bm \eta}\|_{L^p(P(D))} 
				&\le 
                C \| \bv-{\bm m}\|_{L^p(P(D))} 
                \le C \left(\|\bv\|_{L^p(P(D))}+ \| {\bm m}\|_{L^p(P(D))} \right)\\
                & \leq   C \left(\|\bv\|_{L^p(P(D))}+ C_D h_D \| {\dive \bv}\|_{L^p(P(D))} \right). 
			\end{split}
		\end{equation}
        Finally, by definition of $\bm \eta$ and \eqref{est:m}--\eqref{eqn:etaLp}  we obtain
		\begin{alignat*}{1}
			  	\norm{\bPi_D^2 \bv}_{L^p(P(D))} 
                &
                \leq C \left(\|\bv\|_{L^p(P(D))}+ C_D h_D \| {\dive \bv}\|_{L^p(P(D))} \right). 
		\end{alignat*} 
        Then, the estimates on the gradients follow by an inverse estimate, 
		which proves the claim. 
	\end{proof}

	Next we patch together the local
	projections $\bPi^2_D$, as defined in \eqref{Pi2}, to construct the operator
	$\bPi^2 \colon {\bm W}^{1,1}(\Omega) \to \mathring{\bV}_h^k$ defined by
	\begin{equation}\label{def:Pi2}
		\bPi^2 \bv \coloneqq  \sum_{D\in \mathcal{C}_h} \bPi_D^2 \bv  = \sum_{T \in \Th^1} \bPi_T^2 \bv+ \sum_{z \in \mathring{\calS}_h^2} \bPi_{\Omega_h(z)}^2 \bv +\sum_{j=1}^N \bPi_{M_h^j}^2 \bv \qquad \text{ for } \bv \in {\bm W}^{1,1}(\Omega). 
	\end{equation}
    Note that the supports of $\bPi_D \bv$, for $D \in \mathcal{C}_h$, are not pairwise disjoint. 
	To develop a local stability result for this operator, we define
$$\mathcal{C}_h(T) \coloneqq \{ D \in \mathcal{C}_h \colon T \subset P(D) \}$$ for $T \in \mathcal{T}_h$, and $$Q(T) \coloneqq \bigcup_{D \in \mathcal{C}_h(T)} P(D).$$
    
	\begin{lemma}\label{lem:GlobalPi2}
        The operator $\bPi^2 \colon {\bm W}^{1,1}(\Omega) \to \mathring{\bV}_h^k $ as defined in \eqref{def:Pi2} is linear and satisfies for all $\bv \in {\bm W}^{1,1}(\Omega)$ the following properties: 
        \begin{enumerate}[label = (\roman*)]
            \item \label{itm:Pi2-0} 
            $\bPi^2(\bm 0) = \bm 0$;
            \item  $\dive \bPi^2 \bv\in Q^{k-1,\perp}_h(\mathcal{C}_h)$ and
		\begin{equation}\label{751}
			\int_{\Omega} \dive(\bPi^2 \bv) \,  q \dx 
            =\int_{\Omega} \dive \bv \, q  \dx \qquad \text{ for any } q\in Q_h^{k-1, \perp}(\mathcal{C}_h).  
		\end{equation} 
        \item \label{itm:Pi2-stab} 
        Moreover, for $p \in [1,\infty]$ one has 
		\begin{equation}\label{eqn:Pi2Stab}
			\norm{ \bPi^2 \bv}_{L^p(T)} + h_T \|\nabla \bPi^2 \bv\|_{L^p(T)}\le C \left(  \|\bv\|_{L^p(Q(T))} +  C_{Q(T)} h_T  \|\dive \bv\|_{L^p(Q(T))} \right)
		\end{equation}
		for any $T\in \calT_h$ and any $\bv \in {\bm W}^{1,p}(\Omega)$. 
        Here, 
    \begin{align}\label{def:C-QT}
        C_{Q(T)} = 1 + \tfrac{1}{\Theta(Q(T))}
    \end{align}
    with $\Theta(Q(T))$ given in \eqref{def:ThetaD}, and the constant $C>0$ in \eqref{eqn:Pi2Stab}
    depends only on $k,p$ and on the shape regularity of $\mathcal{T}_h$ restricted to $Q(T)$. 
        \end{enumerate}		
	\end{lemma}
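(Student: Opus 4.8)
The plan is to derive all three assertions from the properties of the local operators $\bPi_D^2$ established in Lemma~\ref{lem:Pi2D}, combined with the structural properties of the decomposition $\mathcal{C}_h$. Linearity of $\bPi^2=\sum_{D\in\mathcal{C}_h}\bPi_D^2$ is immediate, since it is a finite sum of the linear operators $\bPi_D^2$; in particular \ref{itm:Pi2-0} follows at once from $\bPi^2(\bm 0)=\sum_{D\in\mathcal{C}_h}\bPi_D^2(\bm 0)=\bm 0$.

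For part (ii), the membership statement follows from linearity together with $\dive\bPi_D^2\bv\in Q_h^{k-1}(D)\subset Q_h^{k-1,\perp}(\mathcal{C}_h)$, where the inclusion is \eqref{eqn:range} and \eqref{Qsubset}; hence $\dive\bPi^2\bv=\sum_{D}\dive\bPi_D^2\bv\in Q_h^{k-1,\perp}(\mathcal{C}_h)$. The identity \eqref{751} is the heart of the matter. Given $q\in Q_h^{k-1,\perp}(\mathcal{C}_h)$, for each $D\in\mathcal{C}_h$ I would introduce $q_D$, the function equal to $q$ on $D$ and to zero elsewhere, and establish the key claim $q_D\in Q_h^{k-1}(D)$. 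Granting this, and using that $\dive\bPi_D^2\bv$ is supported in $D$, together with \eqref{Pi2_1}, one obtains
\[
\int_\Omega \dive\bPi_D^2\bv\,q\dx=\int_\Omega\dive\bPi_D^2\bv\,q_D\dx=\int_\Omega\dive\bv\,q_D\dx=\int_D\dive\bv\,q\dx .
\]
Summing over $D\in\mathcal{C}_h$ and invoking that $\mathcal{C}_h$ is a non-overlapping decomposition of $\overline\Omega$ (cf.~\eqref{eqn:decomposition}) then yields \eqref{751}.

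The main obstacle, and the conceptual crux of the lemma, is the verification of $q_D\in Q_h^{k-1}(D)$. That $q_D$ is a piecewise polynomial supported on a union of triangles is clear, and the zero-mean property $\int_\Omega q_D\dx=\int_D q\dx=0$ is precisely the defining condition of $Q_h^{k-1,\perp}(\mathcal{C}_h)$; the delicate point is the singular-vertex condition $A_h^z(q_D)=0$ for every $z\in\Sh^2$. Here I would show that the decomposition captures entire singular stars, so that for each singular $z$ either all triangles of $\Th(z)$ lie in $D$ (whence $A_h^z(q_D)=A_h^z(q)=0$ because $q\in Q_h^{k-1}$) or none do (whence $A_h^z(q_D)=0$ trivially). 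For $D=T\in\Th^1$ no singular vertex is a vertex of $T$; for $D=\Omega_h(z_0)$ with $z_0\in\mathring{\calS}_h^2$, Proposition~\ref{prop:Sauter} guarantees that $\Omega_h(z_0)$ meets no other singular vertex, so only the full star of $z_0$ is present; for $D=M_h^j$, the maximality of $B_h^j$ together with Proposition~\ref{prop:Sauter} forces every singular vertex meeting $M_h^j$ to lie in $B_h^j$ and to have its complete star inside $M_h^j$. This case distinction, which rests entirely on the combinatorial structure of $\mathcal{C}_h$, is where the argument must be carried out with care.

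Finally, for the stability estimate \eqref{eqn:Pi2Stab} I would localize. On a fixed $T\in\Th$ only the summands with $D\in\mathcal{C}_h(T)$ are nonzero, since $\support(\bPi_D^2\bv)\subset P(D)$ and $\bPi_D^2\bv$ is a continuous piecewise polynomial that vanishes on $T$ unless $T\subset P(D)$; thus $\bPi^2\bv|_T=\sum_{D\in\mathcal{C}_h(T)}\bPi_D^2\bv|_T$. Applying the triangle inequality and the local bound \eqref{761} to each term, and using the inclusions $D\subset P(D)\subset Q(T)$, the monotonicity $\Theta(D)\ge\Theta(Q(T))$ (so that $C_D\le C\,C_{Q(T)}$ with $C_{Q(T)}$ as in \eqref{def:C-QT}), the equivalence $h_D\simeq h_T$, and the uniform finiteness of $\#\mathcal{C}_h(T)$ (both from shape regularity and Remark~\ref{rem:hDsize}), gives the $L^p$-bound on $\bPi^2\bv$; the gradient estimate follows analogously from the gradient part of \eqref{761} after absorbing the factor $h_T/h_D\le C$. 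I expect this final part to be routine once the summation over $\mathcal{C}_h(T)$ has been localized.
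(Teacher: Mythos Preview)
Your proof is correct and follows essentially the same approach as the paper: decompose $q\in Q_h^{k-1,\perp}(\mathcal{C}_h)$ as $q=\sum_{D}\chi_D q$, use \eqref{Pi2_1} together with the fact that $\dive\bPi_D^2\bv$ is supported in $D$, and localize the stability estimate via $\mathcal{C}_h(T)$ and \eqref{761}. The only noteworthy difference is that you spell out the verification of $q_D\in Q_h^{k-1}(D)$ (in particular the singular-vertex condition $A_h^z(q_D)=0$ via the case distinction on the three types of $D\in\mathcal{C}_h$), whereas the paper simply asserts this membership; your added detail is correct and arguably fills a small gap in the paper's exposition.
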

    
	\begin{proof}
   That $\bPi^2$ maps to $\mathring{\bV}^k_h$ follows from the fact that, by Lemma~\ref{lem:Pi2D}, $\bPi^2_D$ maps to $\mathring{\bV}^k_h(D)\subset \mathring{\bV}^k_h$, for each $D \in \mathcal{C}_h$. 
    Furthermore, we have that $\bPi^2_D(\bm 0) = \bm 0$, and thus by \eqref{def:Pi2} the same holds true for $\bPi^2$, which shows~\ref{itm:Pi2-0}. 
    
     By the definition of $\mathring{\bV}_h^k(D)$ in~\eqref{eq:Vhk-D}
		and by~\eqref{Qsubset} we have for each $\bv \in {\bm W}^{1,1}(\Omega)$ that 
		\begin{align}\label{eq:divPi2}
			\dive \bPi^2_D \bv\in Q_h^{k-1}(D)\subset Q_h^{k-1,\perp}(\mathcal{C}_h).
		\end{align} 
        Recall that the sets in $\mathcal{C}_h$ have disjoint interior, see \eqref{eq:Ch-collection}. 
		Thus, for $q\in Q_h^{k-1,\perp}(\mathcal{C}_h)$ we may consider the decomposition 
        \begin{align*}
            q= \sum_{D \in \mathcal{C}_h} q_D, \qquad \text{ where } \quad q_D \coloneqq \chi(D) q\in Q^{k-1}_h(D), 
        \end{align*} 
         and $\chi(D)$ is the characteristic function of $D \in \mathcal{C}_h$.
		Then by \eqref{def:Pi2} and \eqref{Pi2_1}, we have
        \begin{alignat}{1}
			\int_{\Omega} \dive(\bPi^2 \bv) \, q\dx 
			& = \sum_{D \in \mathcal{C}_h} \int_{\Omega}  \dive (\bPi^2 \bv) q_D \dx
			= \sum_{D \in \mathcal{C}_h} \int_{\Omega}  \dive (\bPi_D^2 \bv) q_D \dx \label{eq:Pi2div-cons}
			\\
			&= \sum_{D \in \mathcal{C}_h} \int_{\Omega}  \dive  \bv \,q_D \dx 
			= \int_{\Omega} \dive \bv \, q \dx.
		\end{alignat}
		In \eqref{eq:Pi2div-cons} we used that the support of $\dive(\bPi_D^2 \bv)$ is contained in $D$ and hence for example $\dive (\bPi_D^2 \bv) q_{D'} =0$ when $D,D' \in \mathcal{C}_h$ and $D \neq D'$.  
        This proves~\eqref{751}. 
		
		Next, to prove \eqref{eqn:Pi2Stab}, 
		we use~\eqref{def:Pi2}, Minkowski's inequality and \eqref{761} to find 
        
        \begin{alignat*}{1}
			\| \bPi^2  \bv \|_{L^p(T)}  
			& \le \sum_{D \in \mathcal{C}_h(T)}   \| \bPi_D^2  \bv \|_{L^p(T)}
             \le \sum_{D \in \mathcal{C}_h(T)}   \| \bPi_D^2  \bv \|_{L^p(P(D))}
			\\
			&\le C \sum_{D \in \mathcal{C}_h(T)}  \big( \|\bv \|_{L^p(P(D))} + C_D h_D \|\dive \bv \|_{L^p(D)}\big)\\
			&  \le C \left( 
             \|\bv \|_{L^p(Q(T))}
           +  C_{Q(T)} h_T \|\dive \bv \|_{L^p(Q(T))}\right).
		\end{alignat*}
        In the last step we used $h_T \approx h_D$ for any $D \in \mathcal{C}_h(T)$, which holds with constants depending on the shape regularity constant, cf.~Remark \ref{rem:hDsize}. 
        The estimate on the gradient follows by an inverse estimate.  
	\end{proof}

	Let $\bPi^1\colon {\bm W}^{1,1}(\Omega)  \rightarrow \bV_h^k$  (for $k \ge 2$) be a projection such that 
 for all $T\in \calT_h$, and for $p \in [1,\infty]$
	\begin{alignat}{2}
	 	\int_T \dive \bPi^1 \bv \dx &= \int_T \dive \bv \dx\qquad &&\text{for all }  \bv\in {\bm W}^{1,1}(\Omega) \label{741},\\ 
         \label{SZE-approx}
        \|  \nabla^j (\bPi^1 \bv-\bv )\|_{L^p(T)}
         & \le C h_T^{r-j} \| \nabla^r \bv\|_{L^p(P(T))}\qquad &&\text{for all } \bv\in {\bm W}^{r,p}(\Omega), 
	 \end{alignat}
for any $j \in \{0,1\}$ and $r \in \{1, \ldots, k+1\}$, 
with constant $C$ only depending on $p,k,$ and the shape regularity of $\calT_h$. 
We further assume this operator satisfies
	\begin{equation}\label{eqn:Pi1Boundary}
		\text{if $\bv=\bv_h$ on  $\partial \Omega$ for some $\bv_h \in \bV_h^k$, then } \bPi^1 \bv=\bv  \text{ on } \partial \Omega.
	\end{equation}
	For example, $\bPi^1$ can be constructed by a modification of the Scott--Zhang interpolant using suitable degrees of freedom, see \cite[Lem. 5.4.1 and Rmk.~5.4.4]{BernardiGiraultHechtEtAl2024}. 
	\smallskip 
	
	Finally, the Fortin projection is defined as in \eqref{eqn:FortinDef-0}.

	\begin{theorem}[Fortin operator]\label{thm:Fortin-1}
		For $k\geq 4$ the operator $\bPi\colon {\bm W}^{1,1}(\Omega) \rightarrow \bV_h^k$ 
		defined by \eqref{eqn:FortinDef-0} is a linear projection with the following properties: 
		\begin{enumerate}[label = (\roman*)]
			\item \label{itm:commute} it preserves the divergence in the sense that 
			\begin{equation*}
				\int_{\Omega} \dive(\bPi \bv) \, q \dx =\int_{\Omega} \dive \bv \, q  \dx \qquad \text{ for any } q\in Q_h^{k-1}
              \text{ and }\bv\in {\bm W}^{1,1}(\Omega),
			\end{equation*}
			\item \label{itm:bound} for any $p \in [1,\infty]$ there is a constant $C>0$ such that 
			\begin{equation*}
				\| \nabla \bPi \bv\|_{L^p(T)} \le C(1+ C_{Q(T)}) \|\bv\|_{W^{1,p}(P(Q(T)))} \qquad \text{ for any } 
              \bv \in   {\bm W}^{1,p}(\Omega)
			\end{equation*}
          for any $  T\in \calT_h$. 
           The constant $C_{Q(T)}>0$ is as in~\eqref{def:C-QT}, and the constant $C>0$ only depends on $k,p$ and on the shape regularity of $\mathcal{T}$ restricted to $P(Q(T))$. 
            \item  \label{itm:approx} 
            for any $p \in [1,\infty]$ there is a constant $C>0$ such that 
    \begin{align*}
    \norm{\nabla^j(\bv - \bPi \bv)}_{L^p(T)} \leq C (1 + C_{P(Q(T))}) h_T^{r-j} \norm{\nabla^r \bv}_{L^p(P(Q(T)))}\quad \text{ for any }\bv\in {\bm W}^{r,p}(\Omega),
    \end{align*}
    for any $T \in \mathcal{T}_h$, $j \in \{0,1\}$, and $r \in \{1,\ldots,k+1\}$. 
    Here  with $\Theta(P(Q(T)))$ given in \eqref{def:ThetaD} we have 
    $C_{P(Q(T))} = 1 + \tfrac{1}{\Theta(P(Q(T)))}$.  
    The constant $C$ only depends on $k,p$ and on the shape regularity of $\mathcal{T}$ restricted to $P(Q(T))$.  
			\item  \label{itm:trace-pres}	if 
            one has $\bv=\bv_h$ on  $\partial \Omega$ for some $\bv_h \in \bV_h^k$ then 	
			\begin{equation*}
				\bPi \bv=\bv  \text{ on } \partial \Omega.
			\end{equation*}
		\end{enumerate}
	\end{theorem}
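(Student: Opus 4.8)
The plan is to treat the four properties in turn, using throughout the abbreviation $\bw \coloneqq \bv - \bPi^1 \bv$, so that $\bPi \bv = \bPi^1 \bv + \bPi^2 \bw$ and $\bv - \bPi \bv = (I - \bPi^2)\bw$. I would first record that $\bPi$ is a projection onto $\bV_h^k$: if $\bv = \bv_h \in \bV_h^k$ then $\bPi^1 \bv_h = \bv_h$ since $\bPi^1$ is a projection, hence $\bw = \bm 0$ and $\bPi^2 \bm 0 = \bm 0$ by Lemma~\ref{lem:GlobalPi2}\ref{itm:Pi2-0}, so $\bPi \bv_h = \bv_h$. Property~\ref{itm:trace-pres} is then immediate: by Lemma~\ref{lem:GlobalPi2} the operator $\bPi^2$ maps into $\mathring{\bV}_h^k$, so $\bPi \bv = \bPi^1 \bv$ on $\dO$, and if $\bv = \bv_h$ on $\dO$ then $\bPi^1 \bv = \bv$ on $\dO$ by~\eqref{eqn:Pi1Boundary}.

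For the divergence preservation~\ref{itm:commute} I would split an arbitrary $q \in Q_h^{k-1}$ as $q = \bar q + (q - \bar q)$, where $\bar q$ is the $\mathcal{C}_h$-piecewise constant with $\bar q|_D = \frac{1}{\abs{D}}\int_D q \dx$ for each $D \in \mathcal{C}_h$. On the one hand, $\bar q$ is constant on every triangle, so~\eqref{741} gives $\int_\Omega \dive(\bPi^1 \bv)\,\bar q \dx = \int_\Omega \dive \bv\,\bar q \dx$; and since $\dive \bPi^2 \bw \in Q_h^{k-1,\perp}(\mathcal{C}_h)$ has vanishing mean on each $D$ while $\bar q$ is constant there, $\int_\Omega \dive(\bPi^2 \bw)\,\bar q \dx = 0$. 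Hence the $\bar q$-part is reproduced. On the other hand, \emph{granting that $q - \bar q \in Q_h^{k-1,\perp}(\mathcal{C}_h)$}, \eqref{751} applied to $\bw$ gives $\int_\Omega \dive(\bPi^2 \bw)(q-\bar q) \dx = \int_\Omega \dive \bw\,(q - \bar q) \dx = \int_\Omega \dive \bv\,(q-\bar q) \dx - \int_\Omega \dive(\bPi^1 \bv)(q - \bar q) \dx$, so that $\int_\Omega \dive(\bPi \bv)(q-\bar q)\dx = \int_\Omega \dive(\bPi^1\bv)(q-\bar q)\dx + \int_\Omega \dive(\bPi^2\bw)(q-\bar q)\dx = \int_\Omega \dive \bv\,(q-\bar q)\dx$. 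Adding the two parts yields~\ref{itm:commute}.

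The main obstacle is precisely the bracketed claim $q - \bar q \in Q_h^{k-1,\perp}(\mathcal{C}_h)$. By construction $q - \bar q$ is piecewise polynomial with vanishing mean on each $D$ and $\int_\Omega (q-\bar q)\dx = 0$; what must be verified is membership in $Q_h^{k-1}$, i.e.\ the singular-vertex constraints $A_h^z(q - \bar q)=0$ for all $z\in\Sh^2$. Since $A_h^z(q)=0$, this reduces to $A_h^z(\bar q)=0$. This is exactly what the coarse decomposition $\mathcal{C}_h$ is engineered to guarantee: each interior singular vertex has its full patch $\Omega_h(z)$ as a single member of $\mathcal{C}_h$ (Proposition~\ref{prop:Sauter}), and the boundary singular vertices are grouped into the maximal clusters $M_h^j$, so that $\bar q$ is constant on all triangles abutting $z$; the alternating sum $A_h^z$ must then annihilate this constant, which hinges on the parity of the number of triangles meeting at a singular vertex. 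Establishing this compatibility, especially at the boundary clusters, is the delicate point and is where the structure of $\mathcal{C}_h$ enters crucially.

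Finally, the stability~\ref{itm:bound} and approximation~\ref{itm:approx} estimates follow by the triangle inequality from the local bounds for $\bPi^1$ and $\bPi^2$. For~\ref{itm:bound} I would write $\norm{\nabla \bPi \bv}_{L^p(T)} \le \norm{\nabla \bPi^1 \bv}_{L^p(T)} + \norm{\nabla \bPi^2 \bw}_{L^p(T)}$, bound the first term by the $W^{1,p}$-stability of $\bPi^1$ (from~\eqref{SZE-approx} with $r=j=1$), and the second by~\eqref{eqn:Pi2Stab}, after estimating $\norm{\bw}_{L^p(Q(T))} \le C h_T \norm{\nabla \bv}_{L^p(P(Q(T)))}$ and $\norm{\dive \bw}_{L^p(Q(T))} \le \norm{\nabla \bw}_{L^p(Q(T))} \le C\norm{\nabla \bv}_{L^p(P(Q(T)))}$ via~\eqref{SZE-approx}; this produces the factor $1 + C_{Q(T)}$. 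For~\ref{itm:approx} I would use $\bv - \bPi \bv = (I - \bPi^2)\bw$, so that $\norm{\nabla^j(\bv - \bPi \bv)}_{L^p(T)} \le \norm{\nabla^j \bw}_{L^p(T)} + \norm{\nabla^j \bPi^2 \bw}_{L^p(T)}$; the first term is $\le C h_T^{r-j}\norm{\nabla^r \bv}_{L^p(P(T))}$ by~\eqref{SZE-approx}, and the second is controlled again by~\eqref{eqn:Pi2Stab} combined with~\eqref{SZE-approx} on $Q(T)$, giving the factor $(1+C_{P(Q(T))})h_T^{r-j}$ on the enlarged patch $P(Q(T))$. These last two steps are routine once the local estimates of Lemma~\ref{lem:GlobalPi2} are in hand.
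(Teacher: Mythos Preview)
Your argument is essentially the paper's: the projection and trace properties, the split $q=q_0+q_1$ with $q_0$ the $\mathcal C_h$-piecewise mean handled via \eqref{741} and \eqref{751}, and the stability and approximation bounds obtained from Lemma~\ref{lem:GlobalPi2}\ref{itm:Pi2-stab} combined with \eqref{SZE-approx} all match line for line. The only difference is emphasis: you flag the membership $q-\bar q\in Q_h^{k-1,\perp}(\mathcal C_h)$ as the ``main obstacle'' and discuss parity at singular vertices, whereas the paper disposes of it in a single clause (``note that $q_1\in Q_h^{k-1,\perp}(\mathcal C_h)$''), relying implicitly on the fact that the construction of $\mathcal C_h$ places the full star $\mathcal T_h(z)$ of every singular vertex inside one $D$---so what you present as a delicate compatibility check the paper treats as an immediate observation.
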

	\begin{proof}
		First, to show $\bPi$ is a projection we see that if $\bv \in \bV_h^k$ then $\bv-\bPi^1 \bv=0$
		because $\bPi^1$ is a projection. Hence, by Lemma~\ref{lem:GlobalPi2} it follows that $\bPi^2(\bv-\bPi^1 \bv)=0$, and therefore $\bPi \bv=\bPi^1 \bv=\bv$.
		
		Now we prove~\ref{itm:commute}. 
        Let  $q\in Q_h^{k-1}$ and set $\bw \coloneqq \bPi^1 \bv-\bv$ for $\bv \in {\bm W}^{1,1}(\Omega)$. 
		We then have 
		\begin{equation*}
			\int_{\Omega} \dive(\bPi \bv-\bv) \, q \dx 
            =
            \int_{\Omega} \dive (\bw-\bPi^2 \bw)\, q \dx.
		\end{equation*}
		
		We decompose $q$ into $q= q_0+q_1$, where $q_0|_D =\frac{1}{|D|} \int_D q \dx$ for all $D \in \mathcal{C}_h$,
		and note that $q_1 \in Q_h^{k-1, \perp}(\mathcal{C}_h)$. 
		Moreover, by \eqref{741} we see that $\int_{\Omega} \dive \bw \,q_0\dx=0$. By Lemma~\ref{lem:GlobalPi2} we have that $\dive \bPi^2 \bw  \in Q_h^{k-1, \perp}(\mathcal{C}_h)$, and hence
		\begin{equation}
			\int_{\Omega} \dive (\bw-\bPi^2 \bw)\, q \dx 
            = \int_{\Omega} \dive (\bw-\bPi^2 \bw)\, q_1 \dx 
            =0,  
		\end{equation}
		by \eqref{751}. Therefore \ref{itm:commute} is satisfied.

		To derive the stability estimate \ref{itm:bound},
		we use Minkowski's inequality, \eqref{eqn:Pi2Stab}  and the stability following from \eqref{SZE-approx} with $j = r = 1$, to obtain
		\begin{alignat*}{1}
			\|\nabla \bPi  \bv\|_{L^p(T)}
			&\le \|\nabla \bPi^1  \bv\|_{L^p(T)}+ \|\nabla \bPi^2  (\bv-\bPi^1 \bv)\|_{L^p(T)} \\
			&\le  
            \|\nabla \bPi^1  \bv\|_{L^p(T)}
            + C\left(C_{Q(T)} \|\dive(\bv-\bPi^1 \bv)\|_{L^p(Q(T))} + h_T^{-1} \|\bv-\bPi^1 \bv\|_{L^p(Q(T))}\right)\\%
			&\le C (C_{Q(T)}+1) \|\nabla \bv\|_{L^{p}(P(Q(T)))}.
		\end{alignat*}
	for any $T \in \mathcal{T}_h$ and any $\bv \in {\bm W}^{1,p}(\Omega)$.   

    To prove \ref{itm:approx} we apply the stability properties of $\bPi^2$ due to Lemma~\ref{lem:GlobalPi2}~\ref{itm:Pi2-stab} for $j \in \{0,1\}$, and the approximation properties of $\bPi^1$ in \eqref{SZE-approx} and obtain 
    \begin{align*}
    h_T^j\|\nabla^j(\bv - \bPi \bv)\|_{L^p(T)} 
     &\leq 
        h_T^j\norm{\nabla^j(\bv - \bPi^1 \bv)}_{L^p(T)}  + 
            h_T^j\norm{\nabla^j \bPi^2 (\bv - \bPi^1 \bv)}_{L^p(T)} \\
    & \leq 
    C  \left( 
    \norm{\bv - \bPi^1 \bv}_{L^p(Q(T))} + (1 + C_{Q(T)}) h_T 
    \norm{\nabla (\bv - \bPi^1 \bv)}_{L^p(Q(T))} 
   \right) \\
    &  \leq C(1 + C_{P(Q(T))}) h_T^r \norm{\nabla^{r} \bv}_{L^p(P(Q(T)))},
    \end{align*}
    where we have used shape regularity of $\mathcal{T}_h$. 
    
	Finally, if $\bv = \bv_h$ on $\partial\Omega$ for some $\bv_h\in \bV_h^k$, then $\bPi^1 \bv = \bv$ on $\partial\Omega$ by \eqref{eqn:Pi1Boundary}. 
	Since $\bPi^2$ maps to $\mathring{\bV}^k_h$, we have that $\bPi^2 (\bv - \bPi^1 \bv)\in \mathring{\bV}_h^k$ vanishes
	on $\partial\Omega$. This allows us to conclude that $\bPi \bv = \bv$ on $\partial\Omega$ by the definition of $\bPi$, which proves~\ref{itm:trace-pres}. 
		
	\end{proof}

    \begin{remark}[trace preservation properties]\label{rmk:traces-1}
    For problems with inhomogeneous Dirichlet boundary conditions it can be useful to have further boundary preservation properties, cf.~\cite{EickmannScottTscherpel2025,EickmannTscherpel2025}. 
    By construction,  $\bPi$ has the same boundary preservation properties as $\bPi^1$, since $\bPi^2$ maps to $\mathring{\bV}^k_h$. 
    For example, $\bPi^1$ can be chosen such that 
    \begin{enumerate}[label= (\roman*)]
        \item \label{itm:trace-1} (zero normal trace) $\bPi^1 \bv \cdot \bn = 0$  on $\partial \Omega$   for any $ \bv \in {\bm W}^{1,1}(\Omega)$  with $\bv \cdot\bn = 0$ on  $\partial \Omega$,
         where $\bn$ denotes the outer unit normal on $\partial \Omega$;
         \item \label{itm:trace-2} (mean normal trace preservation) 
         $\int_f (\bv - \bPi^1 \bv) \cdot \bn \, \mathrm{d}s(x) = 0$   for any $\bv \in {\bm W}^{1,1}(\Omega)$. 
    \end{enumerate}
  In particular, those properties transfer to the local Fortin operator $\bPi$ as defined in~\eqref{eqn:FortinDef-0} and so
  \[
  \int_\Omega \dive(\bPi \bv)\dx = \int_{\partial \Omega} (\bPi \bv)\cdot \bn\ds(x) = 
  \int_{\partial \Omega} (\bPi^1 \bv)\cdot \bn\ds(x) =
  \int_{\partial \Omega}  \bv\cdot \bn\ds(x) = \int_\Omega \dive \bv\dx
  \]
  This means that Theorem~\ref{thm:Fortin-1}~\ref{itm:commute} also holds for $q$ from the DG space of polynomial degree $k-1$ without zero mean condition. 

  Note that the original trace-preserving version of the Scott--Zhang operator in~\cite{ScottZhang90} satisfies ~\ref{itm:trace-1}, see~\cite[Lem.~4.11]{GazcaOrozcoGmeinederKokavcovaEtAl2025}. 
  Using the modification of the degrees of freedom as described in \cite[Sec. 5.4]{BernardiGiraultHechtEtAl2024} it additionally satisfies~\ref{itm:trace-2}. 
    \end{remark}
    \begin{remark}\label{LemmaForMacro}
    The polynomial degree restrictions in 
    Lemma~\ref{mainlemma}, Theorem~\ref{thm:Fortin-1}
    and Remark~\ref{rmk:traces-1} can be relaxed on certain split triangulations. 
    For example,
    on Clough--Tocher triangulations, obtained 
    by subdividing each triangle into three subtriangles by connecting the vertices with an interior point,
     Lemma \ref{mainlemma} holds for $k\ge 2$ (cf.~\cite[Theorem 3.1]{GuzmanNeilan18}).
     Likewise on Powell--Sabin triangulations, which divides each triangle
     into six subtriangles, the result is true for $k\ge 1$ \cite[Theorem 1]{GuzmanEtal20}.
     In both instances, the domain $D$ is taken to be a macro element, i.e.,
     the union of three triangles in the case of Clough--Tocher triangulations,
     and the union of six triangles in the case of Powell--Sabin triangulations.
     Consequently, since the construction of $\bPi^1$ requires $k\ge 2$, 
     Theorem \ref{thm:Fortin-1} holds on these split meshes for $k\ge 2$.
     In addition, a local Fortin operator on Powell--Sabin triangulations
     in the case $k=1$ is implicitly constructed in \cite[Theorem 3.4]{FabienEtal22}
     in the case of homogeneous boundary conditions. 
     For inhomogeneous Dirichlet boundary conditions a construction is also possible by constructing $\bPi^1$ so that it satisfies the conditions in Remark~\ref{rmk:traces-1}. 
     
    \end{remark}

    \begin{remark}
        The results in \cite{BelenkiBerselliDieningEtAl2012,DKS.2013,JessbergerKaltenbach2024} are formulated under the assumption that the Fortin operator is local, with estimates
        involving only a single neighboring patch. However, the arguments in these 
        references extend to operators with larger locality, provided that the number
        of layers in the associated neighborhood remains uniformly bounded, see, e.g.,~\cite{Tscherpel2018}.
    \end{remark}
	
    \subsection*{Slip Boundary Conditions}\label{rem:Slippy}
		The above construction and proof
		can be modified to incorporate slip boundary 
		conditions instead of Dirichlet boundary conditions. This means that the velocity functions satisfy $\bv \cdot \bn = 0$ on the boundary $\partial \Omega$, where $\bn$ denotes the outer unit normal on $\partial \Omega$. 
        In this case, instead of with $\mathring{\bV}_h^k$ one works with the velocity space 
			\begin{align}
				\tilde \bV_h^k = \{\bv\in \bV_h^k\colon\ \bv\cdot \bn = 0\text{ on }\partial\Omega\}.
			\end{align}
        Due to the exact divergence constraint in the Scott--Vogelius element, the pressure space has to be adapted as well.
		In this setting, the pressure spaces are replaced by
		\begin{equation}
			\label{eqn:tQh}
			\begin{split}
				\tilde Q_h^{k-1} 
				&= \{q\in \mathring{L}^2(\Omega) \colon  q|_T\in P^{k-1}(T)\ \forall T\in \calT_h,\ A_h^z(q) = 0\ \forall z\in \mathring{S}_h^2\},\\
				\tilde Q_h^{k-1}(D)
				& = \{q\in \tilde Q_h^{k-1}\colon {\rm supp}(q)\subset D\},
			\end{split}
		\end{equation}
		that is, the weak continuity constraint is only imposed at interior singular vertices, see~\cite[Tab.~3, case I]{AinsworthParker2022}. 
        With those definitions one still has $$\dive \tilde \bV_h^k =  \tilde Q_h^{k-1}  $$ 
        and inf-sup stability holds
        for $k \geq 4$~\cite[Thm.~5.1]{AinsworthParker2022}.  
		Correspondingly, the collection of sets is modified to
		\[
		\tilde\calC_h = \{ T\colon T\in \tilde \calT_h^1\}\cup \{\Omega_h(z)\colon z\in \mathring{\mathcal{S}}_h\},
		\]
		where $\tilde \calT_h^1\subset \calT_h$ is the set of triangles 
		with no interior singular vertices.

We then have the following result, analogous to Lemma~\ref{mainlemma}
but with $Q_h^{k-1}(D)$ replaced by $\tilde Q_h^{k-1}(D)$ and $\mathring{\bV}_h^k$
			replaced by $\tilde \bV_h^k$.  Its proof is given in Appendix \ref{app:ProofWithSlip}.

\begin{lemma}\label{mainlemmaWithSlip}
Let $ D\in \tilde \calC_h$, $k\ge 4$ and $p \in [1,\infty]$.
Then for any $q \in \tilde Q_h^{k-1}(D)$, there exists $\bv\in \tilde{\bV}_h^k$ such that
\begin{enumerate}[label = (\roman*)]
\item $\dive \bv=q$, 
\item  the support of $\bv$ is contained in $P(D)$, and
\item $\|\nabla \bv\|_{L^p(P(D))} \le C_D \| q\|_{L^p(D)}$,
with $C_D = C\left(1+ \frac{1}{\Theta(D)}\right)$, with $\Theta(D)$ as in \eqref{def:ThetaD}, and $C>0$ depends only on $k$, $p$ and on the shape
regularity of the triangulation $\calT_h$ restricted to $P(D)$. 
\end{enumerate}
\end{lemma}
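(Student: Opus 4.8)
The plan is to follow the structure of the proof of Lemma~\ref{mainlemma} in Appendix~\ref{appendix}, exploiting that most cases reduce to it directly. The collection $\tilde\calC_h$ contains only single triangles $T\in\tilde\calT_h^1$ and interior singular-vertex patches $\Omega_h(z)$ with $z\in\mathring{\calS}_h^2$. For $D=\Omega_h(z)$ with $z$ interior singular, Proposition~\ref{prop:Sauter} guarantees that $\Omega_h(z)$ meets no other singular vertex, so any $q\in\tilde Q_h^{k-1}(D)$ automatically satisfies $A_h^y(q)=0$ at \emph{every} singular vertex $y$ (trivially for $y\neq z$, since $q$ vanishes near $y$, and by hypothesis at the interior singular vertex $z$), and therefore lies in $Q_h^{k-1}(D)$. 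Lemma~\ref{mainlemma} then supplies a field $\bv\in\mathring{\bV}_h^k\subset\tilde{\bV}_h^k$ with all the required properties, since the zero trace implies the slip condition $\bv\cdot\bn=0$. The same reduction applies to any single triangle $T$ all of whose vertices are non-singular, because then $T\in\Th^1$ and $\tilde Q_h^{k-1}(T)=Q_h^{k-1}(T)$. Hence the only genuinely new case is a single triangle $T\in\tilde\calT_h^1$ carrying a boundary singular vertex $z$, for which, in the slip setting, no constraint $A_h^z(q)=0$ is imposed.

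For this remaining case I would proceed as on a single macroelement. Given $q$ supported on $T$ with $\int_T q\dx=0$, I would first realize the part of $q$ lying in the range of $\dive$ over interior bubble fields $\bP^k(T)$ vanishing on $\partial T$, with the scaling-stable bound $\|\nabla\bv_{\mathrm{int}}\|_{L^p(T)}\le C\|q\|_{L^p(T)}$; such a field is supported in $T$ and trivially satisfies slip. The remainder $r\coloneqq q-\dive\bv_{\mathrm{int}}$ cannot in general be realized by fields supported in $T$ alone and must be transported across the edges of $T$ by edge-bubble fields extending into the neighbouring triangles of $P(T)$, so that the full velocity is supported in $P(T)$ and vanishes on the interior portion of $\partial P(T)$. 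As in Lemma~\ref{mainlemma}, the stability of this edge construction is controlled by the near-degeneracy $\Theta(D)$ of the non-singular vertices, and a Poincaré inequality on $P(T)$ converts the resulting gradient bound into item (iii).

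The step I expect to be the main obstacle is discharging the remainder $r$ at the boundary singular vertex $z$ without the cancellation that the constraint $A_h^z(q)=0$ provides in the Dirichlet construction. The resolution is to exploit the tangential degrees of freedom that the slip condition leaves free along the boundary edges at $z$: one augments the velocity near $z$ by fields with vanishing normal component on $\partial\Omega$ but nonzero tangential component along the boundary, which supply exactly the directions needed to absorb the divergence contribution that in the Dirichlet case forced the constraint. That this is possible is underpinned by the global surjectivity $\dive\tilde{\bV}_h^k=\tilde Q_h^{k-1}$ for $k\ge4$ from~\cite[Thm.~5.1]{AinsworthParker2022}, whose weak-continuity structure omits the constraint at boundary singular vertices (\cite[Tab.~3, case~I]{AinsworthParker2022}). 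I would localize the corresponding correction to $P(T)$ and track its dependence on the angles at the non-singular vertices to obtain the stated estimate with $C_D=C(1+1/\Theta(D))$.
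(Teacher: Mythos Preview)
Your reduction is correct and worth noting: for $D=\Omega_h(z)$ with $z\in\mathring{\calS}_h^2$, Proposition~\ref{prop:Sauter} ensures $\Omega_h(z)$ meets no other singular vertex, so $\tilde Q_h^{k-1}(D)=Q_h^{k-1}(D)$ and Lemma~\ref{mainlemma} already furnishes $\bv\in\mathring{\bV}_h^k\subset\tilde\bV_h^k$; likewise for $D=T$ with only non-singular vertices. You correctly isolate the genuinely new case as a triangle $T\in\tilde\calT_h^1$ carrying a boundary singular vertex.

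For that case, however, the paper takes a different and more concrete route. Rather than an ad hoc interior/edge/tangential decomposition on $T$, the paper first proves a slip analogue of \cite[Lem.~6]{GuzmanScott19}, stated as Lemma~\ref{lem:GZExtension}, and then reruns the proof of Lemma~\ref{mainlemma} verbatim with this lemma substituted for \cite[Lem.~6]{GuzmanScott19}. Lemma~\ref{lem:GZExtension} is exactly your tangential-degrees-of-freedom idea made explicit: for a boundary singular vertex $z$ one reflects the triangles of $\calT_h(z)$ across the boundary line through $z$, turning $z$ into an \emph{interior} singular vertex of an enlarged local triangulation; one extends $q$ symmetrically so that the interior constraint $A_h^z(q)=0$ holds there; then \cite[Lem.~6]{GuzmanScott19} produces a field with $\bv\cdot\bn_e=0$ on every edge, and its restriction to $\Omega$ lies in $\tilde\bV_h^4$ with the required support and bound.

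The gap in your sketch is the final step. Invoking the global surjectivity $\dive\tilde\bV_h^k=\tilde Q_h^{k-1}$ from~\cite{AinsworthParker2022} does not by itself yield a local construction, and you neither build the tangential correction field concretely, nor verify that its support stays in $P(T)$, nor track the constant $C_D$. Your interior-bubble/edge-bubble decomposition also does not obviously isolate the obstruction at $z$ as a single scalar that a tangential field can then kill. The reflection argument sidesteps all of this by reducing to the already-proved Dirichlet lemma.
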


		The construction of an operator $\tilde \bPi^2 \colon {\bm W}^{1,1}(\Omega)\to \tilde \bV_h^k$ satisfying 
		the properties in Lemma \ref{lem:GlobalPi2} with respect
		to these modified spaces and domain decomposition then follows verbatim
        from the same arguments used to construct $\bPi^2$.		
		Finally, the assumption  \eqref{eqn:Pi1Boundary} of the operator $\bPi^1$ is
		replaced by
		\begin{equation}\label{eqn:Pi1BoundarySlip}
			\text{if $\bv\cdot\bn=\bv_h\cdot \bn$ on  $\partial \Omega$ for some $\bv_h \in \bV_h^k$, then } (\bPi^1 \bv)\cdot \bn=\bv\cdot \bn  \text{ on } \partial \Omega.
		\end{equation}
		Note that this property is satisfied for $\bPi^1$ given in  \cite[Lem. 5.4.1 and Rmk.~5.4.4]{BernardiGiraultHechtEtAl2024}.

In summary, by setting $\tilde \bPi \colon {\bm W}^{1,1}(\Omega) \to \bV_h^k$ as
\begin{equation}\label{SlippyFortin}
\tilde \bPi \bv = \bPi^1 \bv+\tilde \bPi^2(\bv - \bPi^1 \bv) \qquad \text{ for } \bv \in {\bm W}^{1,1}(\Omega),
\end{equation}
we have the following result analogous to Theorem~\ref{thm:Fortin-1}.

	\begin{theorem}\label{thm:Fortin-2}
		For $k \geq 4$ the operator $\tilde \bPi\colon {\bm W}^{1,1}(\Omega) \rightarrow \bV_h^k$ 
		defined by \eqref{SlippyFortin} is a linear projection with the following properties:
		\begin{enumerate}[label = (\roman*)]
			\item  it preserves the divergence in the sense that 
			\begin{equation*}
				\int_{\Omega} \dive(\tilde \bPi \bv) \, q \dx =\int_{\Omega} \dive \bv \, q  \dx \qquad 
                \text{ for any } q\in \tilde Q_h^{k-1} \text{ and } \bv\in {\bm W}^{1,1}(\Omega),  
			\end{equation*}
			\item for any $p \in [1,\infty]$
            there is a constant $C>0$ such that 
			\begin{equation*}
				\| \nabla \tilde \bPi \bv\|_{L^p(T)} \le C(1+ C_{Q(T)}) \|\bv\|_{W^{1,p}(P(Q(T)))}\qquad \text{for any } \bv \in {\bm W}^{1,p}(\Omega),
			\end{equation*}
       and for any $T\in \calT_h$.
       The constant $C_{Q(T)}>0$ is as in \eqref{def:C-QT}, and the constant $C>0$ only depends on $k,p$ and on the local shape regularity constant. 

			\item  for any $p \in [1,\infty]$ there is a constant $C>0$ such that 
    \begin{align*}
    \norm{\nabla^j(\bv - \tilde\bPi \bv)}_{L^p(T)} \leq C (1 + C_{P(Q(T))}) h_T^{r-j} \norm{\nabla^r \bv}_{L^p(P(Q(T)))}\quad\text{ for any }\bv\in {\bm W}^{r,p}(\Omega),
    \end{align*}
    for any $T \in \mathcal{T}_h$, $j \in \{0,1\}$, and $r \in \{1,\ldots,k+1\}$. 

    With $\Theta(P(Q(T)))$ as in \eqref{def:ThetaD} the constant is $C_{P(Q(T))} = 1 + \tfrac{1}{\Theta(P(Q(T)))}$.  The constant $C$ only depends on $k,p$ and on the shape regularity of $\mathcal{T}$ restricted to $P(Q(T))$. 
			\item 	if 
            one has $\bv\cdot \bn=\bv_h\cdot \bn$ on  $\partial \Omega$ for some $\bv_h \in \bV_h^k$ then 	
			\begin{equation*}
				(\tilde \bPi \bv)\cdot \bn=\bv\cdot \bn \qquad  \text{ on } \partial \Omega.
			\end{equation*}
		\end{enumerate}
	\end{theorem}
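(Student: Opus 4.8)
The plan is to mirror the proof of Theorem~\ref{thm:Fortin-1} essentially line by line, replacing $\mathring{\bV}_h^k$, $Q_h^{k-1}$, and $\calC_h$ throughout by their slip counterparts $\tilde\bV_h^k$, $\tilde Q_h^{k-1}$, and $\tilde\calC_h$, and invoking Lemma~\ref{mainlemmaWithSlip} wherever the Dirichlet proof used Lemma~\ref{mainlemma}. First I would record the projection property: if $\bv\in \bV_h^k$ then $\bv-\bPi^1\bv=\bm 0$ since $\bPi^1$ is a projection, whence $\tilde\bPi^2(\bv-\bPi^1\bv)=\tilde\bPi^2(\bm 0)=\bm 0$ by the slip analog of Lemma~\ref{lem:GlobalPi2}\ref{itm:Pi2-0}, so that $\tilde\bPi\bv=\bPi^1\bv=\bv$ by \eqref{SlippyFortin}.

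For the divergence preservation~(i), I would fix $q\in \tilde Q_h^{k-1}$ and set $\bw\coloneqq\bPi^1\bv-\bv$, so that $\int_\Omega \dive(\tilde\bPi\bv-\bv)\,q\dx=\int_\Omega\dive(\bw-\tilde\bPi^2\bw)\,q\dx$. Decomposing $q=q_0+q_1$ with $q_0|_D$ equal to the mean of $q$ over $D$ for each $D\in\tilde\calC_h$ leaves $q_1$ in the mean-free slip pressure space $\tilde Q_h^{k-1,\perp}(\tilde\calC_h)$. The constant part $q_0$ is annihilated because \eqref{741} forces $\int_\Omega\dive\bw\,q_0\dx=0$, while the slip analog of Lemma~\ref{lem:GlobalPi2} supplies both $\dive\tilde\bPi^2\bw\in\tilde Q_h^{k-1,\perp}(\tilde\calC_h)$ and the identity corresponding to \eqref{751}; combining these yields $\int_\Omega\dive(\bw-\tilde\bPi^2\bw)\,q\dx=0$.

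The stability estimate~(ii) and the approximation estimate~(iii) follow from the same three-term splitting as in Theorem~\ref{thm:Fortin-1}: starting from \eqref{SlippyFortin}, I would apply Minkowski's inequality, control the $\tilde\bPi^2$ contribution by the slip version of \eqref{eqn:Pi2Stab}, and bound $\bv-\bPi^1\bv$ together with its gradient through the Scott--Zhang stability and approximation bounds \eqref{SZE-approx}; shape regularity of $\calT_h$ restricted to $P(Q(T))$ then absorbs the mesh-size factors and produces the stated constants $C_{Q(T)}$ and $C_{P(Q(T))}$. These steps are a verbatim transcription of the Dirichlet argument and require no new ideas.

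The normal-trace preservation~(iv) is the only place where a genuinely slip-specific ingredient enters. If $\bv\cdot\bn=\bv_h\cdot\bn$ on $\partial\Omega$ for some $\bv_h\in\bV_h^k$, then \eqref{eqn:Pi1BoundarySlip} gives $(\bPi^1\bv)\cdot\bn=\bv\cdot\bn$ on $\partial\Omega$; since $\tilde\bPi^2$ maps into $\tilde\bV_h^k$, whose elements $\bw$ satisfy $\bw\cdot\bn=0$ on $\partial\Omega$, the correction $\tilde\bPi^2(\bv-\bPi^1\bv)$ contributes nothing to the normal trace, so \eqref{SlippyFortin} yields $(\tilde\bPi\bv)\cdot\bn=\bv\cdot\bn$ on $\partial\Omega$. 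The main obstacle, such as it is, lies not in this theorem but upstream: one must confirm that the slip analog of Lemma~\ref{lem:GlobalPi2}---in particular the mean-free decomposition over $\tilde\calC_h$ and the identity replacing \eqref{751}---holds for the modified spaces with the interior-singular-vertex constraint only. Once that is established (by the verbatim repetition of the construction of $\bPi^2$, using Lemma~\ref{mainlemmaWithSlip}), every step above is a direct translation of the Dirichlet case.
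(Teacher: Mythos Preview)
Your proposal is correct and matches the paper's approach exactly: the paper does not spell out a separate proof of Theorem~\ref{thm:Fortin-2} but instead states that the slip analog of Lemma~\ref{lem:GlobalPi2} ``follows verbatim from the same arguments to construct $\bPi^2$'' using Lemma~\ref{mainlemmaWithSlip}, and that the resulting $\tilde\bPi$ is ``analogous to Theorem~\ref{thm:Fortin-1}.'' Your line-by-line translation of the Dirichlet proof, together with the slip-specific use of \eqref{eqn:Pi1BoundarySlip} and the fact that $\tilde\bPi^2$ maps into $\tilde\bV_h^k$ for part~(iv), is precisely what the paper intends.
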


    \begin{remark}
    Similarly as in Remark~\ref{rmk:traces-1} the operator $\bPi^1$ can be chosen to satisfy the trace preservation properties~\ref{itm:trace-1}--\ref{itm:trace-2} therein, see \cite[Sec.~5.4]{BernardiGiraultHechtEtAl2024}. 
    Since $\tilde \bPi^2$ maps to $\tilde \bV_h^k$, the properties \ref{itm:trace-1}--\ref{itm:trace-2} also hold for $\tilde \bPi$ as in \eqref{SlippyFortin}. 
    \end{remark}

	\subsubsection*{Acknowledgements}
    F.E., L.R.S.\ and T.T.\ thank Charlie Parker for useful discussions.  

    The work by J.G.\ was supported in part by the National Science Foundation
    through grant DMS-2309606, and the work of M.N.\ was supported
    in part by the National Science Foundation through grant DMS-2309425. 
    The work by T.T.\ was supported by the German Research Foundation (DFG) via grant TRR 154, subproject C09, project number 239904186 and by the Graduate School CE within Computational Engineering at Technische Universität Darmstadt.

	\bibliography{lit}{}
	\bibliographystyle{amsplain}

	\appendix
	
	\section{Proof of Lemma \ref{mainlemma}}\label{appendix}
	
	\noindent We first prove the claim for $q \in Q_h^{k-1, \perp}(D)$, and then we extend it to $q \in Q_h^{k-1}(D)$ for $D \in \mathcal{C}_h$, see~\eqref{eq:Ch-collection}. 
	
	\textit{Step 1:}
	Let $q \in Q_h^{k-1, \perp}(D)$. A slight generalization of ~\cite[Lem.~6]{GuzmanScott19} shows that there exists a $\bw \in \mathring{\bV}_h^k$ {($k\ge 4$)} such that $\dive \bw=q$ on all the vertices $\Sh$,   $\text{supp } \bw \subset P(D)$,   $\dive \bw \in Q_h^{k-1, \perp}$, and 
	\begin{equation}
		\|\nabla \bw\|_{L^p(P(D))} \le C \left(\frac{1}{\Theta(D)}+1\right) \|q\|_{L^p(D)}. 
	\end{equation}
    
	Letting $r=q-\dive \bw \in Q_h^{k-1, \perp}$, we see that $r$ is supported in $P(D)$,   $r$ vanishes on all the vertices $\Sh$ and $\int_T r\dx =0$ for all $T \in \Th$. 
    Hence, by \cite[Prop.~2]{GuzmanScott19} there exists a ${\bm \eta} \in \mathring{\bV}_h^k$ which is supported in $P(D)$ such that $\dive {\bm \eta}=r$ with the bound 
	\begin{equation}
		\|\nabla {\bm \eta} \|_{L^p(P(D))} \le C  \|r\|_{L^p({P(D)})}. 
	\end{equation}
	We set $\bv= \bw+{\bm \eta}\in \mathring{\bV}_h^k$ to arrive at the result.  
	
	\textit{Step 2:} 
    Let us consider $q \in Q_h^{k-1}(D)$. 
	If $D= T$ for $T \in \Th^1$ then 
	one has $Q_h^{k-1}(T) = Q_h^{k-1,\perp}(T)$, since local and 
    global mean free condition agree for functions supported only in $T$. 
    Thus, 
   the result follows from the first step. 
	Hence, we only need to consider when $D =\Omega_h(z)$ for $z \in \mathring{\calS}_h^2$  and $D= M_h^j$ for $1 \le j \le N$. 
    Since $D$ consists of more than one simplex, the local mean-free conditions are stronger than the one global mean-free condition, i.e., $Q_h^{k-1}(D) \supsetneq Q_h^{k-1,\perp}(D)$. 
    We consider the former case of $D =\Omega_h(z)$ for $z \in \mathring{\calS}_h^2$ only as the latter is similar.

	To this end, let $q \in Q_h^{k-1}(D)$ be arbitrary but fixed. 
    The key is to find $\boldsymbol{\psi} \in \mathring{\bV}_h^k $ with  $\support(\boldsymbol{\psi}) \subset D$ such that 
	\begin{alignat}{3}\label{eq:psi-1}
		\int_{T} (q-\dive \boldsymbol{\psi} ){\dx} &=0 \quad  &&\text{ for all } T \in \Th \text{ with }  T \subset D, \\ 
		\label{eq:psi-2}
		\| \nabla \boldsymbol{\psi}  \|_{L^p(D)}  &\le   C \| q\|_{L^p(D)},&&
	\end{alignat}
    with constant only depending on $p,k$ and on the shape regularity of $\calT_h$. 
	Indeed, if this is the case, then with \eqref{eq:divV} and $\bpsi \in\mathring{\bV}_h^k $ we have $\dive \bpsi \in Q_{h}^{k-1}$, and by \eqref{eq:psi-1} it follows that $q- \dive \boldsymbol{\psi} \in Q_h^{k-1,\perp}(D)$. 
    Thus, we may apply the already proved statement to $q- \dive \boldsymbol{\psi}$, as done in the first step. 
	
	To construct $\boldsymbol{\psi}$, we 
    consider $T_i \in \mathcal{T}_h$, $i = 1, \ldots 4$ ordered clockwise in $D$ such that we have
  $D=T_1 \cup T_2 \cup T_3 \cup T_4$  and we let $e_i \coloneqq \partial T_i\cap \partial T_{i+1}$ be the edge shared by $T_i$ and $T_{i+1}$, with indices understood as  modulo $4$. 
  We let $\bn_i$ be the corresponding normal vector of $e_i$ pointing out of $T_i$. 
    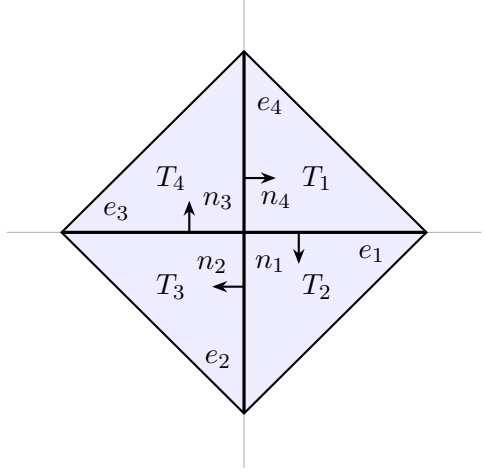
\begin{figure}
        \centering
        \begin{tikzpicture}[scale=1.2, >=Stealth]
    		
    		\coordinate (O) at (0,0);
    		\coordinate (E) at (2,0);
    		\coordinate (N) at (0,2);
    		\coordinate (W) at (-2,0);
    		\coordinate (S) at (0,-2);
    		
    		\draw[gray!60, line width=0.4pt] (-2.6,0)--(2.6,0);
    		\draw[gray!60, line width=0.4pt] (0,-2.6)--(0,2.6);
    		
    		\fill[blue!7]  (O)--(E)--(N)--cycle;   
    		\fill[blue!7]  (O)--(S)--(E)--cycle;   
    		\fill[blue!7]  (O)--(W)--(S)--cycle;   
    		\fill[blue!7]  (O)--(N)--(W)--cycle;   
    		
    		\draw[thick] (N)--(E)--(S)--(W)--cycle;
    		
    		\draw[very thick] (O)--(E);
    		\draw[very thick] (O)--(S);
    		\draw[very thick] (O)--(W);
    		\draw[very thick] (O)--(N);
    		
    		\coordinate (m1) at ($(O)!0.3!(E)$);
    		\coordinate (m2) at ($(O)!0.3!(S)$);
    		\coordinate (m3) at ($(O)!0.3!(W)$);
    		\coordinate (m4) at ($(O)!0.3!(N)$);
    		
    		\coordinate (m10) at ($(O)!0.7!(E)$);
    		\coordinate (m20) at ($(O)!0.7!(S)$);
    		\coordinate (m30) at ($(O)!0.7!(W)$);
    		\coordinate (m40) at ($(O)!0.7!(N)$);
    		
    		\node[below=1pt] at (m10) {$e_1$};
    		\node[left =1pt] at (m20) {$e_2$};
    		\node[above=1pt] at (m30) {$e_3$};
    		\node[right=1pt] at (m40) {$e_4$};
    		
    		\draw[-{Stealth[length=2mm]},thick] (m1) -- ++(0,-0.35) node[left=1pt] {$n_1$}; 
    		\draw[-{Stealth[length=2mm]},thick] (m2) -- ++(-0.35,0) node[above =1pt] {$n_2$}; 
    		\draw[-{Stealth[length=2mm]},thick] (m3) -- ++(0, 0.35) node[right =1pt] {$n_3$}; 
    		\draw[-{Stealth[length=2mm]},thick] (m4) -- ++(0.35,0) node[below =1pt] {$n_4$}; 
    		
    		\node at (0.8, 0.6) {$T_1$};
    		\node at (0.8,-0.6) {$T_2$};
    		\node at (-0.8,-0.6) {$T_3$};
    		\node at (-0.8, 0.6) {$T_4$};	
    	\end{tikzpicture}
        \caption{Example of a domain $D$.}
        \label{fig:placeholder}
    \end{figure}
    We then consider 
	\begin{alignat*}{1}
		\bv_i=  \frac{1}{c_i}b_{e_i} \bn_i,
	\end{alignat*}
	where $b_{e_i}$ denotes the standard quadratic edge bubble function with $\support (b_{e_i} )\subset T_{i} \cup T_{i+1}$,
	and $c_i \coloneqq \int_{e_i} b_{e_i}\ds = |e_i|/6$.
	Then, we see that  $\bv_i$ has support in $T_i \cup T_{i+1}$  and
	\begin{alignat}{1}\label{eq:divTi}
		\int_{T_i} \dive \bv_i\dx= \frac{1}{c_i}\int_{e_i} b_{e_i}\ds=1,
	\end{alignat}
	which also shows that
	\begin{align}\label{eq:divTiplus1}
		\int_{T_{i+1}} \dive \bv_i\dx= -1.  
	\end{align}
	Setting $a_i \coloneqq \int_{T_i} q\dx$, for $i \in \{1,\ldots, 4\}$  we see that $a_1+a_2+a_3+a_4=0$
	because $q\in \mathring{L}^2(\Omega)$ and $\support(q) \subset D$.
	
	We define 
	\begin{equation}\label{def:psi}
		\boldsymbol{\psi}=a_1 \bv_1+(a_1+a_2) \bv_2+ (a_1+a_2+a_3) \bv_3,
	\end{equation}
    and note that $\support(\bpsi) \subset D$. 
	By construction with \eqref{eq:divTi} and \eqref{eq:divTiplus1} we have 
	\begin{alignat*}{1}
		\int_{T_i} \dive \boldsymbol{\psi}\dx= a_i   \qquad  1 \le i \le 3.
	\end{alignat*}
	Moreover,  thanks to $a_1 + a_2 + a_3 + a_4 = 0$ we obtain
	\begin{alignat*}{1}
		\int_{T_4} \dive \boldsymbol{\psi}\dx= -(a_1+a_2+a_3)=a_4.   
	\end{alignat*}
	{This proves~\eqref{eq:psi-1}. 
	To show \eqref{eq:psi-2} let us note that for any $i\in \{1,2,3,4\}$, we have by Hölder's inequality and scaling properties that 
	\begin{align*}
			\abs{a_i} &
			 \leq h_{D}^{2(p-1)/p} \norm{q}_{L^p(D)} \qquad \text{and}\quad
			\norm{\bv_{i}}_{L^\infty(D)} \leq \frac{C}{h_D}. 
	\end{align*}
	Then, by an inverse estimate, the definition of $\bpsi$ in \eqref{def:psi} as well as the estimates above, we find that for any $i\in \{1,2,3,4\}$ we have
	\begin{align*}
		\norm{\nabla \bpsi}_{L^p(T_i)} \leq C h_D^{2/p-1}\norm{\bpsi}_{L^\infty(T_i)} \leq C \norm{q}_{L^p(D)},
	\end{align*}
	which proves \eqref{eq:psi-2}, and hence finishes the proof. 
	}

\section{Proof of Lemma \ref{mainlemmaWithSlip}}\label{app:ProofWithSlip}

The proof of Lemma \ref{mainlemmaWithSlip} relies
on the following extension of \cite[Lem.~6]{GuzmanScott19}
	to the case of slip boundary conditions, where $\tilde Q_{h}^{k-1}$ is the
	discrete pressure space (cf.~\eqref{eqn:tQh}).
    
	\begin{lemma}\label{lem:GZExtension}
		For every $q\in \tilde Q_h^{k-1}$
		and any $z\in \calS_h$, there exists a function $\bv\in \tilde \bV_h^4$ such that
		\begin{subequations}
			\label{eqn:lem6ext}
			\begin{alignat}{2}
				\dive \bv(\sigma) & = 0\qquad &&\text{ for any } \sigma \in \calS_h \text{ with } \sigma\neq z,\\
				\dive \bv|_T(z) & = q|_T(z)\qquad && \text{ for any } T \in \calT_h(z),\\
				\support(\bv)&\subset \Omega_h(z),\\
				\int_K \dive \bv \dx &=0\qquad &&\text{ for any } K\in \calT_h.
			\end{alignat} 
		\end{subequations}
		Moreover,  one has
		\begin{align*}
			\|\nabla \bv\|_{L^p(\Omega_h(z))}\le C\left(\frac{1}{\Theta(z)}+1\right)\|q\|_{L^p(\Omega_h(z))},
		\end{align*}
        with constant $C>0$ only depending on $p, k$ and on the local shape regularity constant.
	\end{lemma}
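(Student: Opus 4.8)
The plan is to mirror the proof of \cite[Lem.~6]{GuzmanScott19} for the Dirichlet case and to isolate the single place where the boundary condition is used, exploiting the extra tangential freedom that the slip condition $\bv\cdot\bn=0$ grants on boundary edges. Since every assertion is local to the patch $\Omega_h(z)$ and only concerns the values of $\dive\bv$ at $z$, its vanishing at the remaining vertices, and the triangle-mean conditions, I would split according to whether $z$ is an interior or a boundary vertex. For an interior vertex the statement is inherited directly: if $z$ is interior singular, the constraint $A_h^z(q)=0$ is still imposed in $\tilde Q_h^{k-1}$ (see \eqref{eqn:tQh}), and if $z$ is interior non-singular then $\Theta(z)>0$. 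In both cases \cite[Lem.~6]{GuzmanScott19} produces $\bv\in\mathring{\bV}_h^4\subset\tilde\bV_h^4$ with all the required properties and the stated bound, so nothing has to be changed.

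The boundary vertex case carries the new content. Label the triangles $T_1,\dots,T_L$ about $z$ and the emanating edges $e_0,e_1,\dots,e_L$, with $e_0\subset\partial T_1$ and $e_L\subset\partial T_L$ lying on $\partial\Omega$, and set $c_j:=q|_{T_j}(z)$. Writing $G_j:=\nabla\bv|_{T_j}(z)$, continuity of $\bv$ across an interior edge $e_j$ forces $(G_{j+1}-G_j)\bt_j=0$, hence $G_{j+1}-G_j=\bw_j\,\bn_j^{\top}$ for a jump vector $\bw_j$; taking traces yields $\operatorname{tr}(G_1)=c_1$ together with the telescoping relations $c_{j+1}-c_j=\bw_j\cdot\bn_j$. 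The only difference from the Dirichlet setting enters through the boundary edges: the condition $\bv=0$ on $e_0$ imposes $G_1\bt_0=0$ (two scalar equations), whereas slip imposes only $(G_1\bt_0)\cdot\bn_0=0$, leaving the tangential part $(G_1\bt_0)\cdot\bt_0$ free, and analogously on $e_L$. These two additional scalar degrees of freedom are exactly what render the system for $(G_1,\bw_1,\dots,\bw_{L-1})$ solvable for arbitrary data $(c_j)_j$, which is why the weak continuity constraint may be dropped at boundary singular vertices in $\tilde Q_h^{k-1}$.

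Having fixed admissible gradient data $G_j$ at $z$, I would realize them by an explicit field $\bv\in\tilde\bV_h^4$ supported in $\Omega_h(z)$, exactly as in \cite{GuzmanScott19}: prescribe $\bv(z)=0$ and the vertex gradients $G_j$, force $\dive\bv$ to vanish at every other vertex of the patch, and use quadratic edge-bubble corrections (as in the proof of Lemma~\ref{mainlemma} in Appendix~\ref{appendix}) to enforce $\int_K\dive\bv=0$ on each triangle $K$. This is where $k\ge 4$ is needed, so that enough nodal, edge, and interior degrees of freedom are available; the slip freedom is used only on the two triangles $T_1,T_L$ adjacent to $\partial\Omega$, where the relaxed boundary condition permits the nonzero tangential vertex data dictated by the solve.

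The main obstacle is the stability bound $\|\nabla\bv\|_{L^p(\Omega_h(z))}\le C(1/\Theta(z)+1)\|q\|_{L^p(\Omega_h(z))}$ with the correct, and no worse, dependence on $\Theta(z)$. Because $\Theta(z)$ is the \emph{maximum} of $|\sin(\theta_j+\theta_{j+1})|$ over consecutive pairs, the argument must pivot on the best-conditioned pair of triangles and propagate the relations $c_{j+1}-c_j=\bw_j\cdot\bn_j$ outward from it, so that only a single factor $1/\Theta(z)$ is incurred; the bound on $\|\nabla\bv\|$ then follows by scaling and an inverse estimate as in Lemma~\ref{lem:Pi2D}. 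Verifying that the extra slip freedom on $T_1,T_L$ does not degrade this estimate, uniformly as $\Theta(z)\to0$, is the delicate point; at a boundary singular vertex $\Theta(z)=0$ and the bound is vacuous, so there only solvability, guaranteed by the slip freedom, is asserted.
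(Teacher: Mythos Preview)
Your plan is sound and would go through, but it takes a genuinely different route from the paper at the one nontrivial case, namely $z\in\calS_h^{2,\partial}$. The paper does \emph{not} redo the vertex--gradient linear algebra or invoke the extra tangential freedom from the slip condition. Instead, for a boundary singular vertex it \emph{extends the mesh}: it reflects (or adjoins) triangles across the adjacent boundary segment so that $z$ becomes an \emph{interior} singular vertex of an enlarged local triangulation $\calT_h^z$, extends $q$ to the new triangles so that the alternating--sum condition $A_h^z(q)=0$ holds on $\calT_h^z$, and then applies \cite[Lem.~6]{GuzmanScott19} verbatim on the extended patch. Because that construction already produces $\bv\cdot\bn_e=0$ on \emph{every} edge, restricting back to $\Omega$ automatically yields a function in $\tilde\bV_h^4$. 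The cases $\#\calT_h(z)\in\{1,2,3\}$ are handled separately only to specify how the extension is made. For all other vertices (interior, and boundary non-singular) the paper, like you, simply cites \cite[Lem.~6]{GuzmanScott19} to obtain $\bv\in\mathring{\bV}_h^4\subset\tilde\bV_h^4$.

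Your direct argument is more illuminating---it shows explicitly \emph{why} dropping the weak--continuity constraint at boundary singular vertices is compatible with the slip space---but it leaves you with two extra obligations the paper's reduction avoids: you must actually exhibit quartic basis functions in $\tilde\bV_h^4\setminus\mathring{\bV_h^4}$ supported in $\Omega_h(z)$ that realize the nonzero tangential vertex data $(G_1\bt_0)\cdot\bt_0$ and $(G_L\bt_L)\cdot\bt_L$, and you must verify solvability of the augmented linear system rather than merely counting degrees of freedom. The paper's reflection trick buys conciseness by reducing everything to the already--proved interior case; your approach buys insight at the cost of reproving a variant of the core lemma.
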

	\begin{proof}
		If $z\in \calS_h\backslash \calS_h^{2,\partial}$, then the proof 
		of \cite[Lem.~6]{GuzmanScott19} shows
		that there exists $\bv\in \mathring{\bV}_h^4 \subset \tilde \bV_h^4$
		satisfying \eqref{eqn:lem6ext}. Moreover, 
		the proof of \cite[Lem.~6]{GuzmanScott19} shows that $\bv\cdot \bn_e=0$
		on all edges $e$ in $\calT_h$, where $\bn_e$ is a unit normal vector with respect to $e$. 
		
		Therefore it suffices 
		to consider the case $z\in \calS_h^{2,\partial}$.
		We split the proof into three cases, based on the cardinality
		of $\calT_h(z)$.
		
		{\bf Case $\# \calT_h(z) = 2$.}
		In this case we choose $T_1, T_2$ such that  $\calT_h(z) = \{T_1,T_2\}$.
		We reflect these two triangles with 
		respect to the line $\partial\Omega\cap (\partial T_1\cup \partial T_2)$
		to create a set of four triangles $\tilde \calT_h(z) = \{T_1,T_2,T_3,T_4\}$,
		labeled such that $T_i$ and $T_{i+1}$ share a common edge.
		We then see that $z$ is an {\em interior} singular vertex
		with respect to the enlarged triangulation $\calT_h^z \coloneqq \calT_h\cup \{T_3,T_4\}$.
		We extend $q$ to $\calT_h^z$ (still denoted by $q$), by naturally extending $q|_{T_2}$ to $T_3$
		and $q|_{T_1}$ to $T_4$. We then have $q|_{T_1}(z)-q|_{T_2}(z)+q|_{T_3}(z)-q|_{T_4}(z)=0.$
		Consequently, by \cite[Lem.~6]{GuzmanScott19},
		there exists $\tilde \bv\in \mathring{\bV}_h^4$ satisfying \eqref{eqn:lem6ext} with respect
		to $\calT_h^z$ and $\tilde \bv\cdot \bn_e=0$
		on all edges in $\calT_h^z$. Here, $\mathring{\bV}_h^4$ represents the quartic
        Lagrange space with respect to the $\calT_h^z$ with vanishing trace
        on the boundary of $\Omega\cup T_3\cup T_4$.
		Setting $\bv$ to be the restriction of $\tilde \bv$ to $\Omega$,
		we find that $\bv\in \tilde \bV_h^4$ and satisfies \eqref{eqn:lem6ext}
		
		{\bf Case $\# \calT_h(z) = 3$.}
		We choose $\calT_h(z) =\{T_1,T_2,T_3\}$ such that $T_i$ and $T_{i+1}$ share 
		a common edge. We then connect a vertex of $T_1$ and $T_3$ by a line
		to create a fourth triangle $T_4$. Then $z$ is an interior singular vertex
		with respect to the triangulation $\calT^z_h\coloneqq \calT_h\cup \{T_4\}$.
		We can then apply the same argument as in the previous case,
		where we extend $q$ as $q|_{T_4} = q|_{T_1}-q|_{T_2}+q|_{T_3}$.
		
		{\bf Case $\# \calT_h(z) = 1$.}
		The proof of the case $\calT_h(z)$ consisting of one triangle
		is similar to the previous two cases, and is therefore omitted.
	\end{proof}

The proof of Lemma \ref{mainlemmaWithSlip} now
directly follows from the arguments in the proof of Lemma \ref{mainlemma},
but replacing \cite[Lem.~6]{GuzmanScott19} with Lemma \ref{lem:GZExtension}
in Step 1 in the proof, and replacing $Q_h^{k-1}$ and $\mathring{\bV}_h^k$
with $\tilde Q_h^{k-1}$ and $\tilde \bV_h^k$, respectively.

\end{document}